\newcommand{\noun}[1]{\textsc{#1}}
\providecommand{\tabularnewline}{\\}
\providecommand{\algorithmname}{Algorithm}
\theoremstyle{plain}
\newtheorem{thm}{\protect\theoremname}
\theoremstyle{plain}
\newtheorem{lem}[thm]{\protect\lemmaname}
\newenvironment{proof}[1][\protect\proofname]{\par
	\normalfont\topsep6\p@\@plus6\p@\relax
	\trivlist
	\itemindent\parindent
	\item[\hskip\labelsep\scshape #1]\ignorespaces
}{%
	\endtrivlist\@endpefalse
}
\providecommand{\proofname}{Proof}
\theoremstyle{plain}
\newtheorem{prop}[thm]{\protect\propositionname}
\def\vec#1{\boldsymbol{#1}}
\renewcommand\[{\begin{equation}}
\renewcommand\]{\end{equation}}
\providecommand{\lemmaname}{Lemma}
\providecommand{\propositionname}{Proposition}
\providecommand{\theoremname}{Theorem}
\begin{document}

\title{Robust Optimal-Complexity Multilevel ILU for \\
 Predominantly Symmetric Systems}

\author{Aditi Ghai\footnotemark[1] \and Xiangmin Jiao\footnotemark[1]\ \footnotemark[2]}

\date{}

\maketitle
\footnotetext[1]{Dept. of Applied Math. \& Stat., Stony Brook University, Stony Brook, NY 11794, USA.}
\footnotetext[2]{Corresponding author. Email: xiangmin.jiao@stonybrook.edu.}
\begin{abstract}
Incomplete factorization is a powerful preconditioner for Krylov subspace
methods for solving large-scale sparse linear systems. Existing incomplete
factorization techniques, including incomplete Cholesky and incomplete
LU factorizations, are typically designed for symmetric or nonsymmetric
matrices. For some numerical discretizations of partial differential
equations, the linear systems are often nonsymmetric but predominantly
symmetric, in that they have a large symmetric block. In this work,
we propose a multilevel incomplete LU factorization technique, called
\emph{PS-MILU}, which can take advantage of predominant symmetry to
reduce the factorization time by up to half. PS-MILU delivers robustness
for ill-conditioned linear systems by utilizing diagonal pivoting
and deferred factorization. We take special care in its data structures
and its updating and pivoting steps to ensure optimal time complexity
in input size under some reasonable assumptions. We present numerical
results with PS-MILU as a preconditioner for GMRES for a collection
of predominantly symmetric linear systems from numerical PDEs with
unstructured and structured meshes in 2D and 3D, and show that PS-MILU
can speed up factorization by about a factor of 1.6 for most systems.
In addition, we compare PS-MILU against the multilevel ILU in ILUPACK
and the supernodal ILU in SuperLU to demonstrate its robustness and
lower time complexity\\
\textbf{Keywords}: incomplete LU factorization; multilevel methods;
Krylov subspace methods; robust preconditioners; linear-time algorithms;
predominantly symmetric systems
\end{abstract}

\section{Introduction}

\textcolor{black}{Preconditioned Krylov subspace (KSP) methods are
widely used for solving sparse linear systems, especially those arising
from numerical discretizations of partial differential equations (PDEs).
These methods typically require some robust and efficient preconditioners
to be effective, especially for large-scale problems. Incomplete factorization
techniques, including incomplete LU factorization with pivoting for
nonsymmetric systems or incomplete Cholesky factorization for symmetric
and positive definite (SPD) systems, are among the most robust preconditioners,
and some of their variants are often quite efficient for linear systems
arising from PDE discretizations. In practice, some linear systems
are often nonsymmetric but have a symmetric block, and it is worth
exploring this partial symmetry to improve the robustness and efficiency
of the incomplete factorizations. Without loss of generality, we assume
the matrix is real and the symmetric part is the leading block; i.e.,
the matrix $\vec{A}\in\mathbb{R}^{n\times n}$ has the form}
\begin{equation}
\vec{A}=\begin{bmatrix}\vec{B} & \vec{F}\\
\vec{E} & \vec{C}
\end{bmatrix},\label{eq:predominant-matrix-form}
\end{equation}
where $\vec{B}\in\mathbb{R}^{n_{1}\times n_{1}}$ symmetric and $\vec{E}\neq\vec{F}^{T}$.
Note that form (\ref{eq:predominant-matrix-form}) includes symmetric
and nonsymmetric matrices as special cases, for which $\vec{C}$ and
$\vec{B}$ are empty, respectively. We are particularly interested
in the case where the size of $\vec{B}$ dominates that of $\vec{C}$,
and we refer to such systems as\textcolor{black}{{} }\textcolor{black}{\emph{predominantly
symmetric}}\textcolor{black}{. }These systems may arise from PDE discretizations.
For example, for the Poisson equation with \textcolor{black}{Dirichlet
boundary conditions, if the Dirichlet nodes are not eliminated from
the system, then we have $\vec{C}=\vec{I}$, $\vec{E}=\vec{0}$, and
$\vec{F}\neq\vec{0}$. Another example is the finite difference methods
for the Poisson equation with Neumann boundary conditions on a structured
mesh: If centered difference is used in the interior and one-sided
difference is used for Neumann boundary conditions, then we obtain
a predominantly symmetric system, where the rows in $\vec{B}$ correspond
to the interior nodes and those in $\vec{C}$ correspond to the Neumann
nodes. Similarly, for some variants of finite difference methods,
such as embedded boundary and immersed boundary methods for parabolic
or elliptic problems, the matrix block corresponding to the interior
nodes may be symmetric but that corresponding to near-boundary nodes
is in general nonsymmetric. Other examples include finite element
methods with a high-order treatment of Neumann boundary conditions
over curved domains, which modify the rows in the stiffness matrix
corresponding to Neumann nodes instead of simply substituting the
boundary conditions to the right-hand side vector; see e.g. \cite{bochev2017optimally}.
In all these examples,} due to the surface-to-volume ratio, the size
of $\vec{C}$ is much smaller than that of $\vec{B}$.

Given a predominantly symmetric matrix in form (\ref{eq:predominant-matrix-form}),
it is conceivable that one would like to take advantage of the symmetry
in $\vec{B}$ to reduce the factorization time, especially if the
size of $\vec{B}$ dominates that of $\vec{C}$. To the best of our
knowledge, there was no incomplete factorization technique in the
literature that can take advantage of this predominant symmetry. The
lack of such a method is probably because most incomplete factorization
techniques are based on variants of LU factorization with or without
column pivoting for nonsymmetric systems or Cholesky factorization
for SPD systems. Since LU with column pivoting would destroy the symmetry,
whereas LU without pivoting is unstable, there is no known stable
LU factorization techniques for predominantly symmetric systems. As
a result, it is a challenging task to develop an incomplete ILU algorithm
for predominantly symmetric systems, and it requires a drastically
different approach than traditional factorization techniques. Note
that the form (\ref{eq:predominant-matrix-form}) shares some similarity
with KKT systems\textcolor{black}{, which often require special solvers;
see \cite{BenziGolubLiesen05NSS} for an extensive survey. However,}
(\ref{eq:predominant-matrix-form}) is different from KKT systems
in that $\vec{E}\neq\vec{F}^{T}$ and $\vec{C}$ is typically nonzero
in (\ref{eq:predominant-matrix-form}). Although a preconditioner
for (\ref{eq:predominant-matrix-form}) may be applied to KKT systems,
the converse is in general not true.

In this paper, we propose an incomplete ILU technique, referred to
as \emph{PS-MILU}, for predominantly symmetric systems, which will
provide a robust, efficient, and unified factorization algorithm for
symmetric and nonsymmetric systems. We develop this method based on
the following key ideas. First, we introduce a modified version of
the Crout update procedure in \cite{li2003crout} to support symmetric
LDL$^{\text{T}}$ and nonsymmetric LDU factorizations with diagonal
pivoting. Second, to improve robustness, we adopt the framework of
multilevel ILU factorization \cite{Boll06MPC} to defer the factorizations
of rows and columns that would cause the norms of $\vec{L}^{-1}$
and $\vec{U}^{-1}$ to grow rapidly. Third, to achieve efficiency,
we limit the number of nonzeros in the approximate factors to be within
a constant factor of those in the input, and develop data structures
to ensure optimal time complexity of the overall algorithm. In particular,
we ensure that the cost of updating the nonzeros in the $\vec{L}$
and $\vec{U}$ factors is linear, assuming the number of nonzeros
per row and per column is bounded by a constant, and its cost dominates
all the other steps, including pivoting, sorting, dropping, and computing
of the Schur complement, etc. As a result, PS-MILU delivers a robust
and optimal-complexity method for most linear systems from PDE discretizations.
Furthermore, it can speed up the factorization by up to a factor of
two for predominantly symmetric systems. We present numerical results
with PS-MILU as a preconditioner for GMRES for a collection of linear
systems from numerical PDEs with unstructured and structured meshes
in 2D and 3D. In addition, we compare PS-MILU against the multilevel
ILU with diagonal pivoting in ILUPACK \cite{ilupack} and the supernodal
ILU with column pivoting in SuperLU \cite{lishao10}. Our numerical
results show that PS-MILU scales better than both ILUPACK and SuperLU,
while delivering comparable robustness.

\textcolor{black}{The remainder of the paper is organized as follows.
In Section~\ref{sec:background}, we review some background knowledge
on incomplete ILU factorization and its variants. In Section~\ref{sec:PS-MILU},
we describe the components of the proposed multilevel ILU for predominantly
symmetric systems. In Section\ \ref{sec:Implementation-details},
we present some implementation details of the algorithm,} with a focus
on the data structure, updating and pivoting, and the complexity analysis.\textcolor{black}{{}
In Section~\ref{sec:Numerical-Results}, we present some numerical
results with PS-MILU as a preconditioner for GMRES and compare its
performance with other techniques. Finally, Section~\ref{sec:Conclusions and Future Work}
concludes the paper with a discussion of future work.}

\section{Background\label{sec:background}}

The technique proposed in this work is based on multilevel ILU factorization,
which is one of the most effective preconditioner for Krylov subspace
methods. Multilevel ILU is a sophisticated variant of incomplete LU
factorization. In \textcolor{black}{\cite{ghai2017comparison}}, we
compared multilevel ILU against some other preconditioners (including
SOR, ILU with and without thresholding or pivoting \textcolor{black}{\cite{saad1994ilut}},
BoomerAMG \cite{hypre-user} and smoothed-aggregation AMG \cite{GeeSie06ML})
for several Krylov subspace methods (including GMRES \cite{Saad86GMRES},
BiCGSTAB \cite{vanderVorst92BiCGSTAB}, TFQMR \cite{Freund93TFQMR},
and QMRCGSTAB \cite{CGS94QMRCGS}). It was shown that although multigrid
methods are often the most efficient preconditioners when applicable,
thanks to its nearly linear scaling, incomplete LU techniques are
more robust for ill-conditioned systems. Among the incomplete LU techniques,
multilevel ILU delivers the best balance between robustness and efficiency.
In this section, we give a brief overview of incomplete LU factorization.
We refer readers to \cite{chan1997approximate} for a survey of incomplete
factorization techniques up to 1990s and refer readers to \textcolor{black}{\cite{ghai2017comparison}
for a recent comparison of ILU with other preconditioners for nonsymmetric
systems.}

At a high-level, incomplete LU (or ILU) without pivoting performs
an approximate factorization 
\begin{equation}
\vec{P}^{T}\vec{A}\vec{Q}\approx\vec{L}\vec{U},\label{eq:LU}
\end{equation}
where $\vec{L}$ and $\vec{U}$ are far sparser than their corresponding
factors in the complete LU factorization of $\vec{A}$ after some
row and column reordering. Let $\vec{M}=\vec{L}\vec{U}$, so that
$\vec{P}\vec{M}\vec{Q}^{T}$ is a \emph{preconditioner} of $\vec{A}$,
or equivalently $\vec{M}$ is a preconditioner of $\vec{P}^{T}\vec{A}\vec{Q}$.
In general, $\vec{M}$ is a good preconditioner if the eigenvalues
of $\vec{P}^{T}\vec{A}\vec{Q}\vec{M}$ are well clustered. Given a
linear system $\vec{A}\vec{x}=\vec{b}$, if right-preconditioning
is used, which is typically preferred over left-preconditioning \cite{ghai2017comparison},
the system is then solved by first solving 
\begin{equation}
\vec{A}\left(\vec{P}\vec{M}\vec{Q}^{T}\right)^{-1}\vec{y}=\vec{b},\label{eq:preconditioned-system}
\end{equation}
and then $\vec{x}=\left(\vec{P}\vec{M}\vec{Q}^{T}\right)^{-1}\vec{y}=\vec{Q}\vec{U}^{-1}\vec{L}^{-1}\vec{P}^{T}\vec{y}$.
This type of preconditioner was first used by \textcolor{black}{Simon
\cite{simon1988incomplete} for SPD systems, for which }incomplete
LU reduces to incomplete Cholesky factorization with symmetric reordering,
i.e., $\vec{P}^{T}\vec{A}\vec{P}\approx\vec{R}^{T}\vec{R}$. 

In its simplest form, ILU does not involve any pivoting, and $\vec{L}$
and $\vec{U}$ preserve the sparsity patterns of the lower and upper
triangular parts of $\vec{A}$, respectively. This approach is often
referred to as \emph{ILU0} or \emph{ILU}(0). Unfortunately, it is
typically ineffective and often fails in practice. For linear systems
arising from elliptic PDE, a simple modification, first used in \cite{dupont1968approximate}
and \cite{gustafsson1978class}, is to modify the diagonal entries
to compensate the discarded entries, for example by adding up all
the entries that have been dropped and then subtracting the sum from
the corresponding diagonal entry of $\vec{U}$. This is known as \emph{modified
ILU} \cite{Saad03IMS}.\footnote{The modified ILU is sometimes also abbreviated as MILU. However, in
this work, we use MILU as the abbreviation for multilevel ILU.} A more general approach is to allow \emph{fills}, a.k.a. \emph{fill-ins},
which are new nonzeros in the $\vec{L}$ and $\vec{U}$ factors at
the zeros in $\vec{A}$. Traditionally, the fills are introduced based
on their levels in the elimination tree or based on the magnitude
of numerical values. The former leads to the so-called \emph{ILU}($k$),
which zeros out all the fills of level $k+1$ or higher in the elimination
tree. The combination of the two is known as \emph{ILU with dual thresholding}
(\emph{ILUT}) \cite{saad1994ilut}. Most implementations of ILU, such
as those in PETSc \cite{petsc-user-ref}, hypre \cite{hypre-user},
and the fine-grained parallel ILU algorithm \cite{chow2015fine},
use some variants of ILUT, and they may also allow the user to control
the number of fills in each row.

Because ILUT does not involve pivoting, its effectiveness is still
limited. It often fails in practice for ill-conditioned problems or
for KKT-like systems. To improve its robustness, partial pivoting
can be added into\emph{ }\textit{\emph{ILUT}}, leading to the so-called
\emph{ILUTP} \cite{Saad03IMS}. The\textit{ }ILU implementations in
MATLAB \cite{MATLAB}, SPARSKIT \cite{Saad1994Sparsekit}, and SuperLU
\cite{lishao10}, for example, are based on ILUTP. However, ILUTP
suffers from a major drawback: in general, a small drop tolerance
is needed for ILUTP for robustness, but a large drop tolerance is
needed to avoid rapid growth in the number of fills. As a result,
parameter tuning for ILUTP is a difficult, and sometimes even impossible,
task. Often, the number of fills in ILUTP grows nearly quadratically
for PDE problems as the problem sizes grow, so it is impractical to
use ILUTP for very large-scale problems.

The issues of non-robustness of ILUT and poor-scaling of ILUTP are
mitigated by \emph{multilevel ILU}, or \emph{MILU} for short. Unlike
ILUTP, MILU uses diagonal pivoting instead of partial pivoting with
a deferred factorization for ``problematic'' rows and columns. More
specifically, if a particular row or column would lead to a large
$\left\Vert \vec{L}^{-1}\right\Vert _{\infty}$ or $\left\Vert \vec{U}^{-1}\right\Vert _{1}$,
which are estimated incrementally, then the row and its corresponding
column will be permuted to the lower-right corner of $\vec{A}$ to
be factorized more robustly in the next level \textcolor{black}{\cite{bollhofer2003robust,bollhofer2002relations}}.
Specifically, let $\vec{P}$ and $\vec{Q}$ denote the permutation
matrices due to diagonal pivoting and some reordering. A preconditioner
$\vec{M}$ corresponding to the permuted matrix $\vec{P}^{T}\vec{A}\vec{Q}$
can be constructed via the approximation
\begin{equation}
\vec{P}^{T}\vec{A}\vec{Q}=\begin{bmatrix}\hat{\vec{B}} & \hat{\vec{F}}\\
\hat{\vec{E}} & \hat{\vec{C}}
\end{bmatrix}\approx\hat{\vec{M}}=\begin{bmatrix}\vec{I} & 0\\
\hat{\vec{E}}\tilde{\vec{B}}^{-1} & \vec{I}
\end{bmatrix}\begin{bmatrix}\tilde{\vec{B}} & 0\\
0 & \vec{S}_{C}
\end{bmatrix}\begin{bmatrix}\vec{I} & \tilde{\vec{B}}^{-1}\hat{\vec{F}}\\
0 & \vec{I}
\end{bmatrix},\label{eq:milu-preconditioner}
\end{equation}
where $\tilde{\vec{B}}$ approximates $\hat{\vec{B}}$ via incomplete
factorization, i.e., $\hat{\vec{B}}\approx\tilde{\vec{B}}=\vec{L}\vec{U}$;
$\hat{\vec{C}}$, $\hat{\vec{E}}$ and $\hat{\vec{F}}$ are composed
of the deferred rows and columns; $\vec{S}_{C}$ is its Schur complement.
Here, $\hat{\vec{B}}$ is in general nonsymmetric, unlike $\vec{B}$
in (\ref{eq:predominant-matrix-form}). Note that 
\begin{equation}
\hat{\vec{M}}^{-1}=\begin{bmatrix}\tilde{\vec{B}}^{-1} & \vec{0}\\
\vec{0} & \vec{0}
\end{bmatrix}+\begin{bmatrix}-\tilde{\vec{B}}^{-1}\hat{\vec{F}}\\
\vec{I}
\end{bmatrix}\vec{S}_{C}^{-1}\begin{bmatrix}-\hat{\vec{E}}\tilde{\vec{B}}^{-1} & \vec{I}\end{bmatrix}.\label{eq:Schur-complement-inverse}
\end{equation}
Approximating the inverse of $\vec{S}_{C}$ by the same construction
recursively, we obtain a ``multilevel'' structure. Let $\left(\vec{P}_{S}^{T}\vec{S}_{C}\vec{Q}_{S}\right)^{-1}$
be approximated by $\vec{Q}_{S}^{T}\tilde{\vec{S}}_{C}^{-1}\vec{P}_{S}$,
where $\vec{P}_{S}$ and $\vec{Q}_{S}$ denote the permutation matrices
due to diagonal pivoting and reordering on $\vec{S}_{C}$. The multilevel
preconditioner $\vec{M}$ is defined by
\begin{equation}
\vec{M}^{-1}=\begin{bmatrix}\tilde{\vec{B}}^{-1} & \vec{0}\\
\vec{0} & \vec{0}
\end{bmatrix}+\begin{bmatrix}-\tilde{\vec{B}}^{-1}\hat{\vec{F}}\\
\vec{I}
\end{bmatrix}\tilde{\vec{S}}_{C}^{-1}\begin{bmatrix}-\hat{\vec{E}}\tilde{\vec{B}}^{-1} & \vec{I}\end{bmatrix}.\label{eq:multilevel-inverse}
\end{equation}
Compared to ILUT, MILU is shown to be more robust, thanks to its diagonal
pivoting and deferred factorization. Compared to ILUTP, MILU significantly
reduces the number of fills by avoiding partial pivoting. In addition,
the estimated inverse norm in MILU is also used in the dropping criteria
to further improve robustness. A robust serial implementation of MILU
is available in ILUPACK \textcolor{black}{\cite{Boll06MPC,ilupack}}. 

For virtually all ILU techniques, some preprocessing steps, including
\emph{scaling }(commonly referred to as\emph{ matching}) and \emph{reordering},
can help improve their efficiency and robustness \textcolor{black}{\cite{duff1999design,benzi2000preconditioning}.
}\emph{Scaling} aims to scale the rows and columns of the matrix such
that the diagonal entries have magnitude $1$ and the off-diagonal
entries have magnitude no greater $1$. To this end, the matrix may
need to be permuted using a graph matching algorithm, and hence this
procedure is often referred to as \emph{matching}. A commonly used
scaling procedure is the MC64 routine in the HSL library \cite{duff1999design},
which is based on the maximum weighted bipartite matching algorithm
in \cite{olschowka1996new}. MC64 supports both symmetric and nonsymmetric
matrices. For symmetric matrices, permutation is done symmetrically
to rows and columns, and hence it may be impossible to make all the
diagonal entries to have magnitude $1$. In such cases, the matching
algorithm may need to construct $2\times2$ diagonal blocks whose
off-diagonal entries have magnitude $1$. After matching, one may
perform \emph{reordering} to further reduce fills during factorization.
The popular reordering techniques including approximate minimum degree
(AMD) \cite{amestoy1996approximate}, nested dissection (ND) \cite{georgecomputer},
etc., which are commonly used in sparse LU or Cholesky factorizations.
To preserve the effect of scaling, reordering should be performed
symmetrically. For nonsymmetric matrices, this can be done by applying
the symmetric AMD on $\vec{A}+\vec{A}^{T}$. For symmetric matrices,
a robust reordering algorithm would need to be performed on the block
matrix to preserve the $2\times2$ diagonal blocks.\textcolor{black}{{}
In our experiments, MILU with MC64 matching/scaling and symmetric
AMD reordering tends to deliver very good results, which will be the
basis of this work.}

\section{Predominantly Symmetric Multilevel Incomplete LU-Factorization\label{sec:PS-MILU}}

Our proposed method, \emph{PS-MILU}, aims to exploit the symmetry
in (\ref{eq:predominant-matrix-form}) to provide a unified algorithm
for symmetric and nonsymmetric matrices and to improve efficiency
for predominantly symmetric matrices. To this end, we consider incomplete
LDU factorizations, where the approximate $\vec{L}$ and $\vec{U}$
factors are unit lower and upper triangular matrices, respectively.
For symmetric matrices, $\vec{U}=\vec{L}^{T}$, so LDU factorization
reduces to LDL$^{\text{T}}$ factorization. We use a multilevel framework
with matching, scaling, reordering, and diagonal pivoting at each
level, which will preserve the symmetry of the leading symmetric block. 

Figure~\ref{fig:PS-MILU-Workflow} shows the overall flowchart of
our algorithm. The algorithm takes the input matrix in the form (\ref{eq:predominant-matrix-form}).
If the leading block $\vec{B}$ is symmetric, then we will use incomplete
LDL$^{\text{T}}$ factorization with symmetric matching and reordering.
Note that for best efficiency, the user may need to permute the matrix
beforehand so that the leading symmetric block $\vec{B}$ is as large
as possible. In the context of PDE discretizations, this typically
involves putting the interior nodes before the boundary nodes in the
matrix. The algorithm uses diagonal pivoting to push ``problematic''
rows and columns in $\vec{B}$ to the lower-right corner, which are
then merged into $\vec{C}$, $\vec{E}$ and $\vec{F}$, whose corresponding
Schur complement $\vec{S}_{C}$ is then factorized recursively. The
recursion stops if the Schur complement is small or dense enough.

\begin{figure}
\centering{}\includegraphics[width=0.8\textwidth]{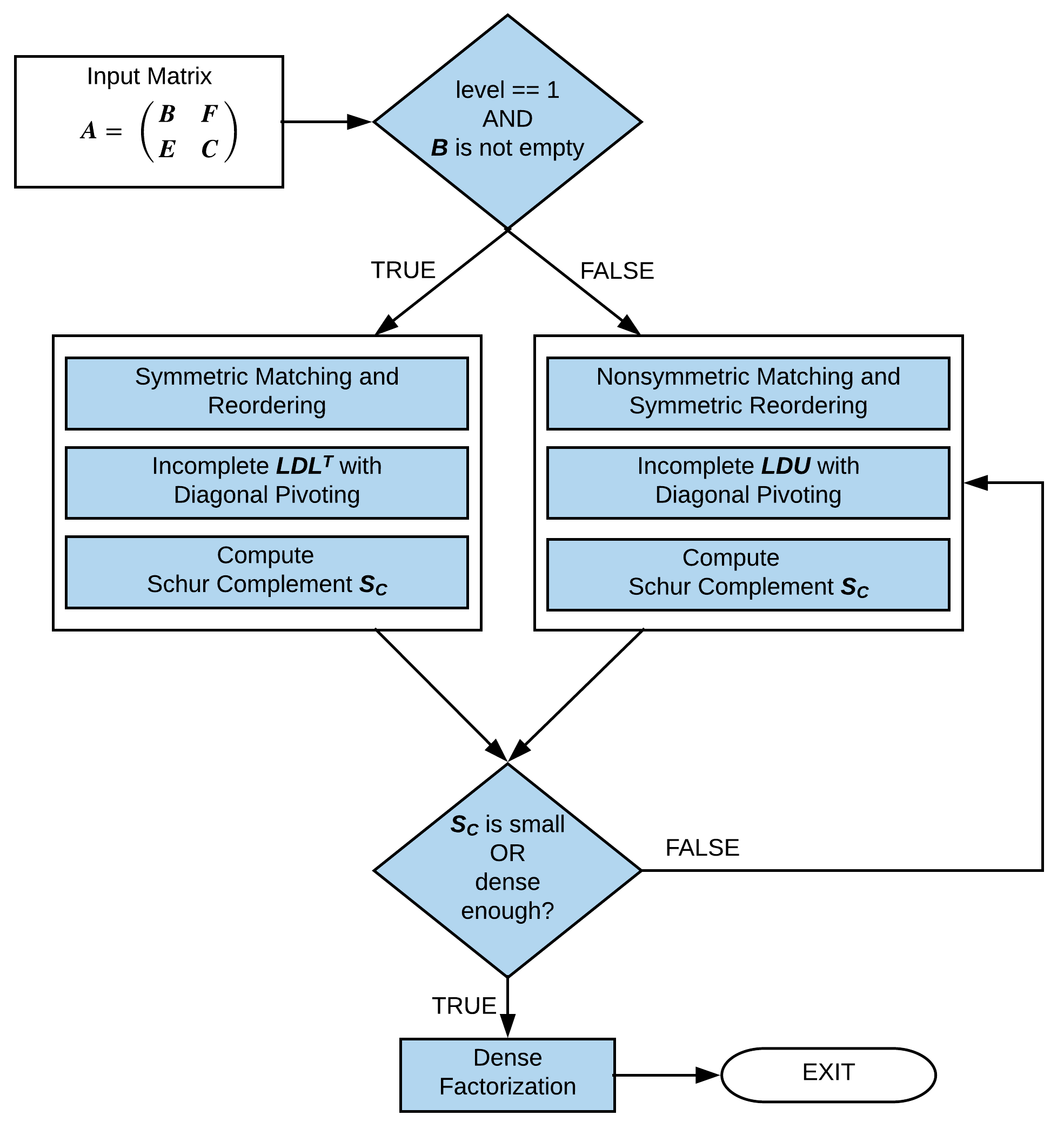}\caption{\label{fig:PS-MILU-Workflow}The overall workflow of PS-MILU.}
\end{figure}

Overall, our algorithm has five main components:
\begin{enumerate}
\item \textbf{preprocessing}, including MC64 matching and AMD reordering
for improving diagonal dominance and reducing fills;
\item \textbf{modified Crout incomplete factorization}, namely, incomplete
symmetric LDL$^{\text{T}}$ and nonsymmetric LDU factorizations, which
adapt the Crout update of $\vec{L}$ and $\vec{U}$ to support early
update of $\vec{D}$ for diagonal pivoting;
\item \textbf{diagonal pivoting}, for controlling the growth of $\left\Vert \vec{L}^{-1}\right\Vert _{\infty}$
and $\left\Vert \vec{U}^{-1}\right\Vert _{1}$;
\item \textbf{inverse-based thresholding}, which controls the dropping in
$\vec{L}$ and $\vec{U}$ to control the growth of the inverse norm;
\item \textbf{hybrid Schur complement}, which modifies the Schur complement
by adaptively adding a correction term to compensate the leading error
term associated with droppings in the current level.
\end{enumerate}
In the following subsections, we will describe each of these components
in more detail.

\subsection{Preprocessing for PS-MILU\label{subsec:Preprocessing-for-PS-MILU}}

In PS-MILU, we apply matching, scaling, and reordering at each level
before factorization. We use the HSL library subroutine MC64 \cite{duff1999design}
followed by symmetric AMD \cite{amestoy1996approximate} for these
purposes. 

Consider the input matrix $\vec{A}$ in (\ref{eq:predominant-matrix-form}),
and assume the leading symmetric block $\vec{B}$ is not empty. We
apply symmetric MC64 on $\vec{B}$, followed by symmetric AMD. Let
\begin{equation}
\vec{A}_{1}=\vec{P}_{1}^{T}\vec{D}_{1}\vec{B}\vec{D}_{1}\vec{P}_{1},\label{eq:preprocessed-B}
\end{equation}
where $\vec{D}_{1}$ and $\vec{P}_{1}$ are the scaling and permutation
matrices, respectively. For now, let us assume that symmetric matching
is successful, so that the diagonals of $\vec{A}_{1}$ are all ones.
Let $\hat{\vec{P}}=\begin{bmatrix}\vec{P}_{1}\\
 & \vec{I}
\end{bmatrix}$ and $\hat{\vec{D}}=\begin{bmatrix}\vec{D}_{1}\\
 & \vec{I}
\end{bmatrix}$, where $\vec{I}$ has the same dimension as $\vec{C}$. Then, we
obtain
\begin{equation}
\hat{\vec{A}}=\hat{\vec{P}}^{T}\hat{\vec{D}}\vec{A}\hat{\vec{D}}\hat{\vec{P}}.\label{eq:preprocessed-matrix-A}
\end{equation}
Figure\ \ref{fig:Sparsity-pattern-FEM} shows the sparsity patterns
of an example predominantly symmetric matrix before and after permutations
due to MC64 and AMD, where the leading block is to the upper-left
of the black lines, whose symmetry is preserved. 
\begin{flushleft}
\begin{figure}
\begin{minipage}[t]{0.45\textwidth}%
\begin{center}
\includegraphics[width=0.9\columnwidth]{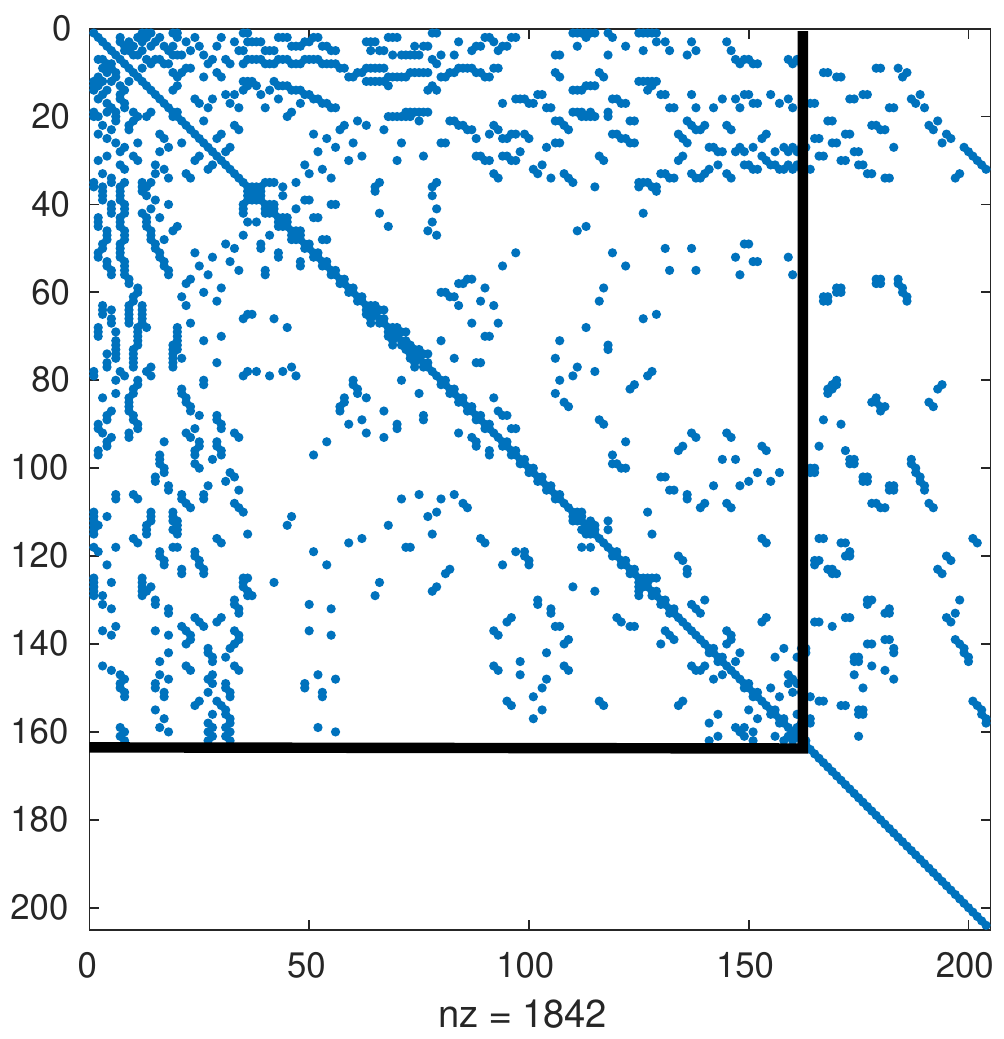}
\par\end{center}%
\end{minipage}\hfill{}%
\begin{minipage}[t]{0.45\textwidth}%
\begin{center}
\includegraphics[width=0.9\columnwidth]{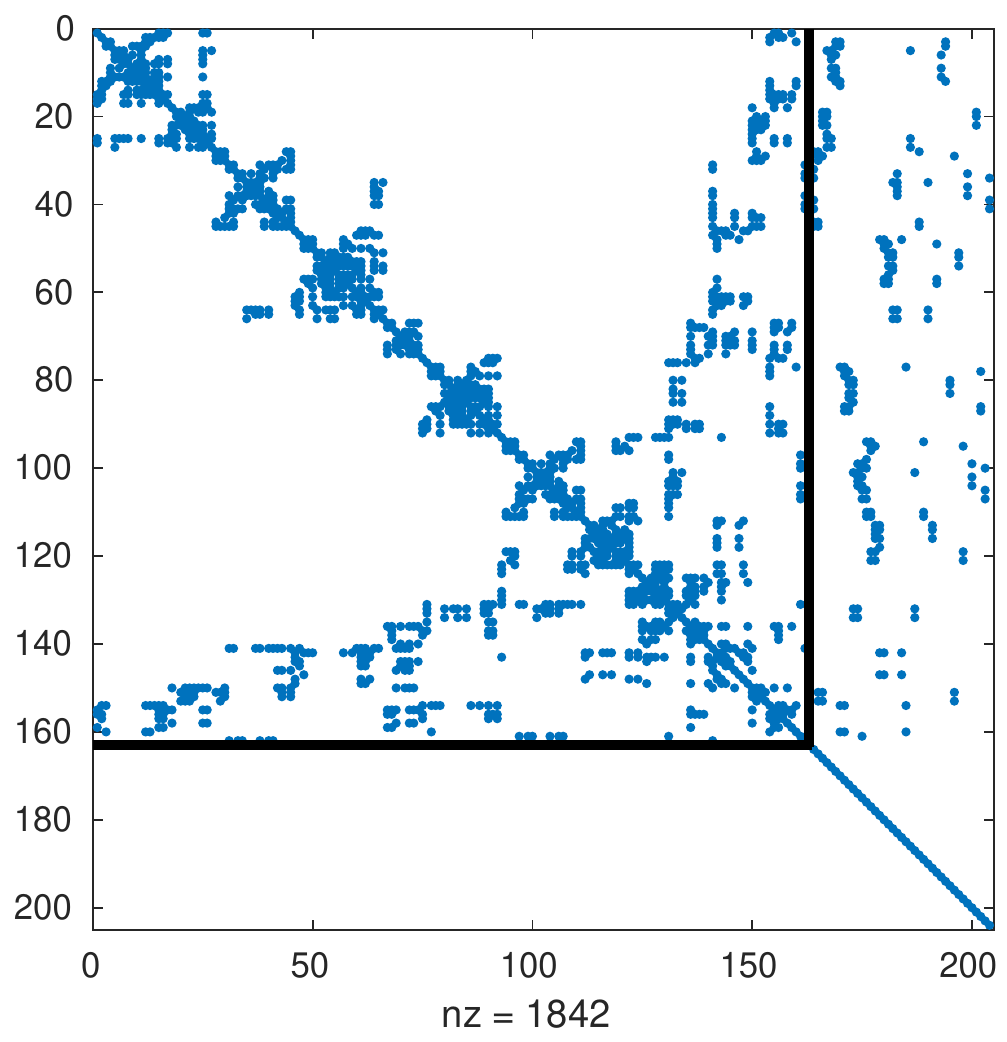}
\par\end{center}%
\end{minipage}

\caption{\label{fig:Sparsity-pattern-FEM} Sparsity pattern of a predominantly
symmetric example matrix from FEM 2D before (left) and after MC64
matching and AMD reordering.}
\end{figure}
\par\end{flushleft}

If $\vec{A}$ is fully nonsymmetric, i.e., $\vec{B}$ is empty, then
we apply the nonsymmetric version of M64 and followed by symmetric
AMD on $\vec{A}+\vec{A}^{T}$ to preserve the scaling effect. In this
case, we have 
\begin{equation}
\hat{\vec{A}}=\vec{P}_{r}^{T}\vec{D}_{r}\vec{A}\vec{D}_{c}\vec{P}_{c},\label{eq:preprocessed-nonsymmetric-A}
\end{equation}
where $\vec{D}_{r}$ and $\vec{D}_{c}$ are row and column scaling
matrices, and $\vec{P}_{r}^{T}$ and $\vec{P}_{c}$ are the row and
column permutation matrices, respectively. Again, all the diagonals
of $\hat{\vec{A}}$ have magnitude 1 due to matching and scaling. 

We note two important special cases. First, symmetric matching and
scaling may result in $2\times2$ modulus-1 diagonal blocks, of which
the diagonal entries may be small or even zero. To take advantage
of these $2\times2$ diagonal blocks, one must perform reordering
on the block matrix followed by a block version of LDL$^{\text{T}}$,
which would significantly complicate the implementation and also introduce
fills into the block rows and columns. For simplicity, we remove the
part of $\vec{B}$ of which the diagonal entries are smaller than
1 after symmetric matching and defer them to the second level, where
nonsymmetric matching will be used. Second, if matrix $\vec{A}$ has
some nearly denser rows and columns, we will permute them to the lower
right corners of $\vec{C}$ and tag them to prevent from diagonal
pivoting. These operations may reduce the size of the block $\vec{A}_{1}$.

\subsection{Incomplete LDL$^{\text{T}}$ and LDU Factorization}

After obtaining the preprocessed matrix $\hat{\vec{A}}$, we then
compute the incomplete factorization. Since the leading symmetric
block may be indefinite, we use incomplete LDL$^{\text{T}}$ factorization
instead of Cholesky factorization for the leading block, and we will
use diagonal pivoting to control the growth of the inverse of the
triangular factors, as described in Section \ref{subsec:Diagonal-Pivoting}.
For a unified treatment of predominantly symmetric and nonsymmetric
matrices, we use LDU factorization with diagonal pivoting, where $\vec{L}$
is unit lower triangular and $\vec{U}$ is unit upper triangular,
and $\vec{U}=\vec{L}^{T}$ if $\hat{\vec{B}}$ is symmetric.

Let $\vec{P}$ denote the permutation matrix due to diagonal pivoting
during the factorization process. The incomplete LDU factorization
computes 
\begin{equation}
\vec{P}^{T}\hat{\vec{A}}\vec{P}=\begin{bmatrix}\hat{\vec{B}} & \hat{\vec{F}}\\
\hat{\vec{E}} & \hat{\vec{C}}
\end{bmatrix}\approx\begin{bmatrix}\tilde{\vec{B}} & \tilde{\vec{F}}\\
\tilde{\vec{E}} & \hat{\vec{C}}
\end{bmatrix}=\begin{bmatrix}\vec{L}_{B} & 0\\
\vec{L}_{E} & \vec{I}
\end{bmatrix}\begin{bmatrix}\vec{D}_{B} & 0\\
0 & \vec{S}_{C}
\end{bmatrix}\begin{bmatrix}\vec{U}_{B} & \vec{U}_{F}\\
0 & \vec{I}
\end{bmatrix},\label{eq:factorization-level-1}
\end{equation}
where, $\hat{\vec{B}}\approx\tilde{\vec{B}}=\vec{L}_{B}\vec{D}_{B}\vec{U}_{B}$,
$\hat{\vec{E}}\approx\tilde{\vec{E}}=\vec{L}_{E}\vec{D}_{B}\vec{U}_{B}$
and $\hat{\vec{F}}\approx\tilde{\vec{F}}=\vec{L}_{B}\vec{D}_{B}\vec{U}_{F}$.
If $\hat{\vec{B}}$ is symmetric, then $\vec{U}_{B}=\vec{L}_{B}^{T}$
and $\tilde{\vec{B}}=\vec{L}_{B}\vec{D}_{B}\vec{L}_{B}^{T}$. Due
to potential diagonal pivoting, $\hat{\vec{B}}$ may have fewer rows
and columns than the leading block $\vec{A}_{1}$ in Section~\ref{subsec:Preprocessing-for-PS-MILU}.
The matrix $\vec{S}_{C}$ is the Schur complement, i.e., $\vec{S}_{C}=\hat{\vec{C}}-\vec{L}_{E}\vec{D}_{B}\vec{U}_{F}$. 

We compute the incomplete LDU factorization using a procedure similar
to the Crout version \cite{li2003crout}. Let $\tilde{\vec{L}}$ and
$\tilde{\vec{U}}$ denote $\begin{bmatrix}\vec{L}_{B}\\
\vec{L}_{E}
\end{bmatrix}$ and $\begin{bmatrix}\vec{U}_{B} & \vec{U}_{F}\end{bmatrix}$, respectively.
Let $\vec{\ell}_{k}$ and $\vec{u}_{k}^{T}$ denote the $k$th column
of $\tilde{\vec{L}}$ and the $k$th row of $\tilde{\vec{U}}$, respectively,
and let $\vec{D}_{k}$ denote the leading $k\times k$ block of $\vec{D}_{B}$.
The Crout version builds $\tilde{\vec{L}}$ and $\tilde{\vec{U}}$
incrementally in columns and rows by computing
\begin{align}
\vec{\ell}_{k} & =\left(\vec{P}^{T}\hat{\vec{A}}\vec{P}\right)_{k+1:n,k}-\tilde{\vec{L}}_{k+1:n,1:k-1}\vec{D}_{k-1}\tilde{\vec{U}}_{1:k-1,k}\label{eq:Crout-Update-L}\\
\vec{u}_{k}^{T} & =\left(\vec{P}^{T}\hat{\vec{A}}\vec{P}\right)_{k,k+1:n}-\tilde{\vec{L}}_{k,1:k-1}\vec{D}_{k-1}\tilde{\vec{U}}_{1:k-1:k+1:n}\label{eq:Crout-Update-U}
\end{align}
at the $k$th step. The left panel of Figure~\ref{fig:Illustration-update}
illustrates the update of $\vec{\ell}_{k}$; and the update of $\vec{u}_{k}^{T}$
is done symmetrically. The right panel of Figure~\ref{fig:Illustration-update}
illustrates the details of updating and dropping, which we will explain
in Section~\ref{subsec:Overall-Factorization-Algorithm}. In PS-MILU,
the entries of $\vec{u}_{k}^{T}$ within $\vec{U}_{B}$ are equal
to their corresponding entries in $\vec{\ell}_{k}$, so we can save
their computational cost by half. However, those in $\vec{U}_{F}$
must be updated separately from $\vec{L}_{E}$. If $\hat{\vec{A}}$
is fully nonsymmetric, the same procedure described above applies
by updating the whole $\vec{u}_{k}^{T}$.

\begin{figure}
\begin{centering}
\includegraphics[width=1\textwidth]{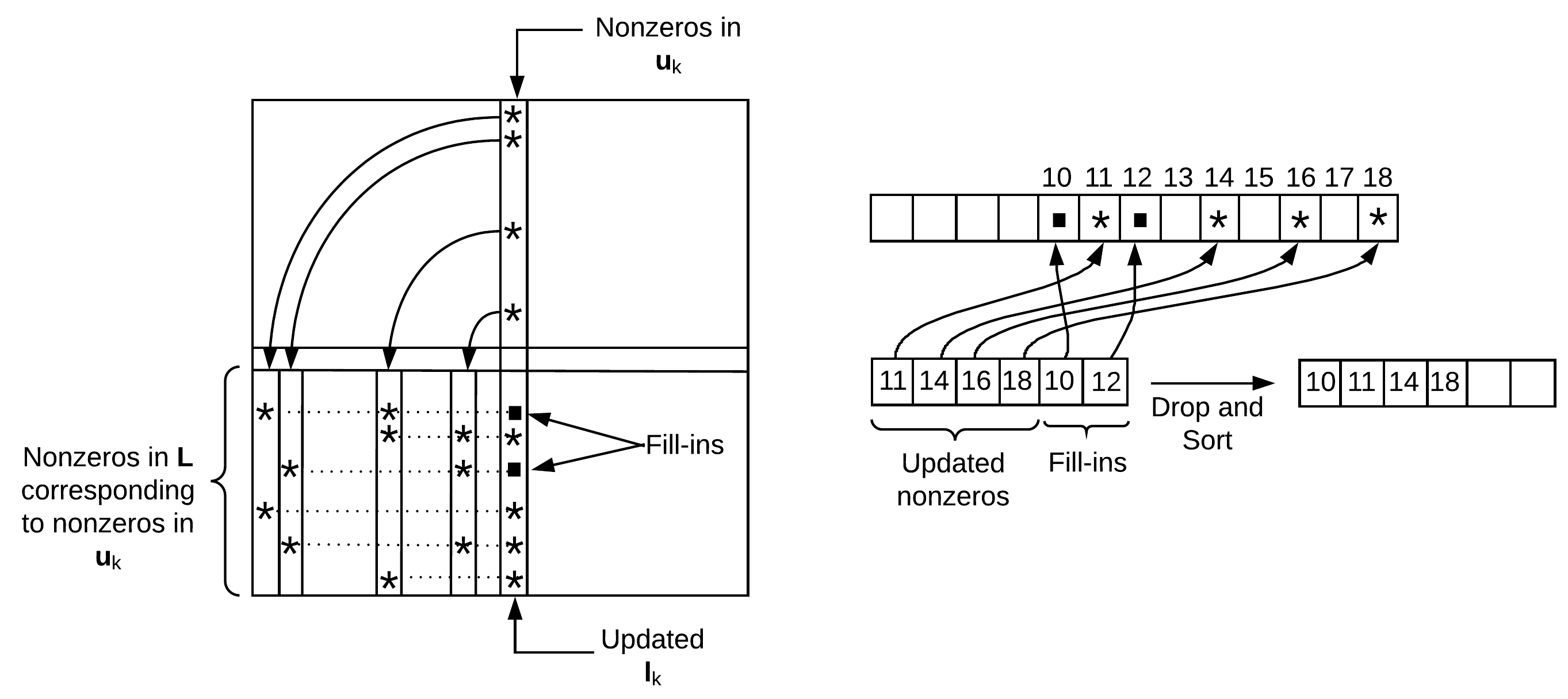}
\par\end{centering}
\caption{\label{fig:Illustration-update}Illustration of updating $\vec{\ell}_{k}$
in Crout update.}
\end{figure}

Compared to the classical LU factorization procedures, which update
the Schur complement at the $k$th step, the Crout version allows
easier incorporation of dropping in $\vec{\ell}_{k}$ and $\vec{u}_{k}^{T}$,
as we will describe shortly. The Crout procedure in \cite{li2003crout}
did not support diagonal pivoting. For effective pivoting, we update
$\vec{D}_{B}$ after computing $\vec{\ell}_{k}$ and $\vec{u}_{k}^{T}$
at step $k$, before applying droppings to $\vec{\ell}_{k}$ and $\vec{u}_{k}^{T}$.
Specifically, let $\vec{d}^{(k-1)}$ denote the vector containing
the partial result of $\vec{D}_{B}$ from the step $k-1$, where $\vec{d}^{(0)}$
is initialized to the diagonal entries of $\hat{\vec{B}}$. Let $\vec{\ell}_{B,k}$
and $\vec{u}_{B,k}^{T}$ denote the $\vec{L}_{B}$ and $\vec{U}_{B}$
portions of $\vec{\ell}_{k}$ and $\vec{u}_{k}^{T}$, respectively.
Then,
\[
\vec{d}_{k+1:m}^{(k)}=\vec{d}_{k+1:m}^{(k-1)}-d_{k}\vec{\ell}_{B,k}.*\vec{u}_{B,k}^{T},
\]
where $m$ is the size of $\hat{\vec{B}}$ and $.*$ denotes element-wise
multiplication. When diagonal pivoting is performed, we will need
to permute entries in $\vec{d}$, along with the rows in $\tilde{\vec{L}}$
and columns in $\tilde{\vec{U}}$, which we describe next.

\subsection{Diagonal Pivoting\label{subsec:Diagonal-Pivoting}}

One of the crucial components in PS-MILU is diagonal pivoting. For
nonsymmetric systems, diagonal pivoting has been shown to be very
effective \cite{bollhofer2003robust}. For symmetric systems, the
importance of pivoting is at least as important as for nonsymmetric
cases, for at least two reasons. First, as a direct method, LDL$^{\text{T}}$
without pivoting may break down for indefinite systems. A simplest
example is $\vec{A}=\begin{bmatrix}0 & 1\\
1 & 0
\end{bmatrix}.$ A direct implication of this fact is that incomplete LDL$^{\text{T}}$
without pivoting can suffer from instability for symmetric and indefinite
systems. A well-known factorization for symmetric and indefinite systems
is block LDL$^{\text{T}}$ with diagonal pivoting \cite{bunch1971direct},
which requires $1\times1$ and $2\times2$ diagonal blocks. Even though
it may be possible that block LDL with much larger blocks can be stable
without pivoting across blocks, but pivoting may still be needed within
blocks.

The second reason is that for incomplete factorizations, diagonal
pivoting is important in controlling the growth of the inverse of
the triangular factors. This can be shown as follows. Consider the
incomplete LDU factorization with pivoting,
\begin{equation}
\vec{P}^{T}\hat{\vec{A}}\vec{Q}=\vec{L}\vec{D}\vec{U}+\vec{\delta}_{A},\label{eq:ILU-error}
\end{equation}
where $\vec{\delta}_{A}$ is the error due to dropping. Let $\vec{M}=\vec{L}\vec{D}\vec{U}$
be a right-preconditioner of $\vec{P}^{T}\hat{\vec{A}}\vec{Q}$. Then,
\begin{equation}
\vec{P}^{T}\hat{\vec{A}}\vec{Q}\vec{M}^{-1}=\vec{I}+\vec{\delta}_{A}\vec{M}^{-1}=\vec{I}+\vec{\delta}_{A}\vec{U}^{-1}\vec{D}^{-1}\vec{L}^{-1},\label{eq:right-preconditioned-matrix}
\end{equation}
where the spectral radius of the second term is bounded by 
\begin{align}
\rho\left(\vec{\delta}_{A}\vec{M}^{-1}\right) & \leq\left\Vert \vec{\delta}_{A}\vec{U}^{-1}\vec{D}^{-1}\vec{L}^{-1}\right\Vert \label{eq:error-bounds}\\
 & \,\leq\left\Vert \vec{D}^{-1}\right\Vert \left\Vert \vec{L}^{-1}\right\Vert \left\Vert \vec{U}^{-1}\right\Vert \left\Vert \vec{\delta}_{A}\right\Vert .\label{eq:condition-number-spectrum}
\end{align}
Therefore, $\left\Vert \vec{D}^{-1}\right\Vert \left\Vert \vec{L}^{-1}\right\Vert \left\Vert \vec{U}^{-1}\right\Vert $
is an absolute condition number for the spectral radius of the preconditioned
matrix $\vec{P}^{T}\hat{\vec{A}}\vec{Q}\vec{M}^{-1}$ with respect
to $\left\Vert \vec{\delta}_{A}\right\Vert $. For a well-scaled matrix
$\hat{\vec{A}}$, $\left\Vert \vec{\delta}_{A}\right\Vert $ is well
bounded, and $\left\Vert \vec{D}^{-1}\right\Vert $ is typically small.
However, a large $\left\Vert \vec{L}^{-1}\right\Vert $ or $\left\Vert \vec{U}^{-1}\right\Vert $
can significantly deteriorate the spectral radius, which in turn can
undermine the convergence of the preconditioned KSP method. Note that
even with block LDL$^{\text{T}}$ without pivoting, $\left\Vert \vec{L}^{-1}\right\Vert $
and $\left\Vert \vec{U}^{-1}\right\Vert $ may still grow rapidly.
Therefore, we consider pivoting indispensable for the robustness of
incomplete factorization.

Estimating the 2-norms are relatively expensive. However, we can estimate
and bound the $\infty$-norm of the inverse of a triangular matrix
efficiently, in that 
\begin{equation}
\left\Vert \vec{T}^{-1}\right\Vert _{\infty}=\sup_{\Vert\vec{v}\Vert_{\infty}=1}\left\Vert \vec{T}^{-1}\vec{v}\right\Vert _{\infty}\gtrapprox\sup_{c_{i}=\pm1}\left\Vert \vec{T}^{-1}\vec{c}\right\Vert _{\infty},\label{eq:triangular-norm}
\end{equation}
where the sign of $c_{i}$ is chosen in a greedy fashion to maximize
$\left|\vec{e}_{i}^{T}\vec{T}^{-1}\vec{c}\right|$; see \cite{cline1979estimate,golub2012matrix}
for more detail. In the context of LDU factorization, let $\vec{L}_{k}$
and $\vec{U}_{k}$ denote the leading $k\times k$ blocks of $\vec{L}$
and $\vec{U}$, respectively, and let $\vec{c}_{L,k}$ and $\vec{c}_{U,k}$
denote their corresponding $\vec{c}$ vectors in (\ref{eq:triangular-norm}).
We can bound $\left\Vert \vec{L}_{k}^{-1}\right\Vert _{\infty}$ incrementally:
Given that $\left\Vert \vec{L}_{k-1}^{-1}\vec{c}_{L,k-1}\right\Vert _{\infty}\leq\tau_{\kappa}$
for some threshold $\tau_{\kappa}$ as estimated greedily based on
(\ref{eq:triangular-norm}), then $\left\Vert \vec{L}_{k}^{-1}\vec{c}_{L,k}\right\Vert _{\infty}>\tau_{\kappa}$
based on the same estimation if and only if
\begin{equation}
\tilde{\kappa}_{L,k}=\left|\vec{e}_{k}^{T}\vec{L}_{k}^{-1}\vec{c}_{L,k}\right|>\tau_{k},\label{eq:bound inv L}
\end{equation}
where $\vec{e}_{k}\in\mathbb{R}^{k}$. Similarly, we can bound $\left\Vert \vec{U}_{k}^{-1}\right\Vert _{1}=\left\Vert \vec{U}_{k}^{-T}\right\Vert _{\infty}$
by estimating 
\begin{equation}
\tilde{\kappa}_{U,k}=\left|\vec{e}_{k}^{T}\vec{U}_{k}^{-T}\vec{c}_{U,k}\right|,\label{eq:bound inv U}
\end{equation}
where $\tilde{\kappa}_{U,k}=\tilde{\kappa}_{L,k}$ for symmetric $\hat{\vec{B}}$. 

It is easy to incorporate the estimation of the condition numbers
into the Crout update. In particular, after computing $\vec{\ell}_{k}$
and $\vec{u}_{k}^{T}$ at the $k$th step, we incrementally update
$\vec{c}_{L,k}$, $\vec{c}_{U,k}$, $\vec{L}_{k}^{-1}\vec{c}_{L,k}$
and $\vec{U}_{k}^{-T}\vec{c}_{U,k}$ from their partial results in
step $k-1$, compute $\tilde{\kappa}_{L,k}$ and $\tilde{\kappa}_{U,k}$,
and then compare them with the threshold $\tau_{\kappa}$. According
to (\ref{eq:condition-number-spectrum}), we also need to safeguard
$\left\Vert \vec{D}^{-1}\right\Vert $ by comparing $\left|1/d_{k}\right|$
against a threshold $\tau_{d}$. Let $m$ denote the size of $\hat{\vec{B}}$.
If $\max\{\tilde{\kappa}_{L,k},\tilde{\kappa}_{U,k}\}>\tau_{\kappa}$,
or $\vert1/d_{k}\vert>\tau_{d}$, we exchange row $k$ and column
$k$ in $\hat{\vec{A}}$ with the $m$th row and column of $\hat{\vec{A}}$,
reduce the size of $\hat{\vec{B}}$ (i.e., $m$) by one, and recompute
the Crout update. Figure~\ref{fig:Diagonal-pivoting} illustrates
this pivot operation. This process repeats until $\tilde{\kappa}_{L,k}$,
$\tilde{\kappa}_{U,k}$, and $\vert1/d_{k}\vert$ are all within threshold
or the leading block has been exhausted. Since we have pre-updated
$\vec{D}$ in the modified Crout update, we check $\vert1/d_{m}\vert$
before pivoting and reduce $m$ by $1$ if $\vert1/d_{m}\vert<\tau_{d}$
directly. This avoids unnecessary exchanges if $\left|d_{m}\right|$
is small. In terms of implementation, special care must be taken to
ensure optimal time complexity, which we will address in Section\ \ref{sec:Implementation-details}.
\begin{figure}
\begin{centering}
\includegraphics[width=1\textwidth]{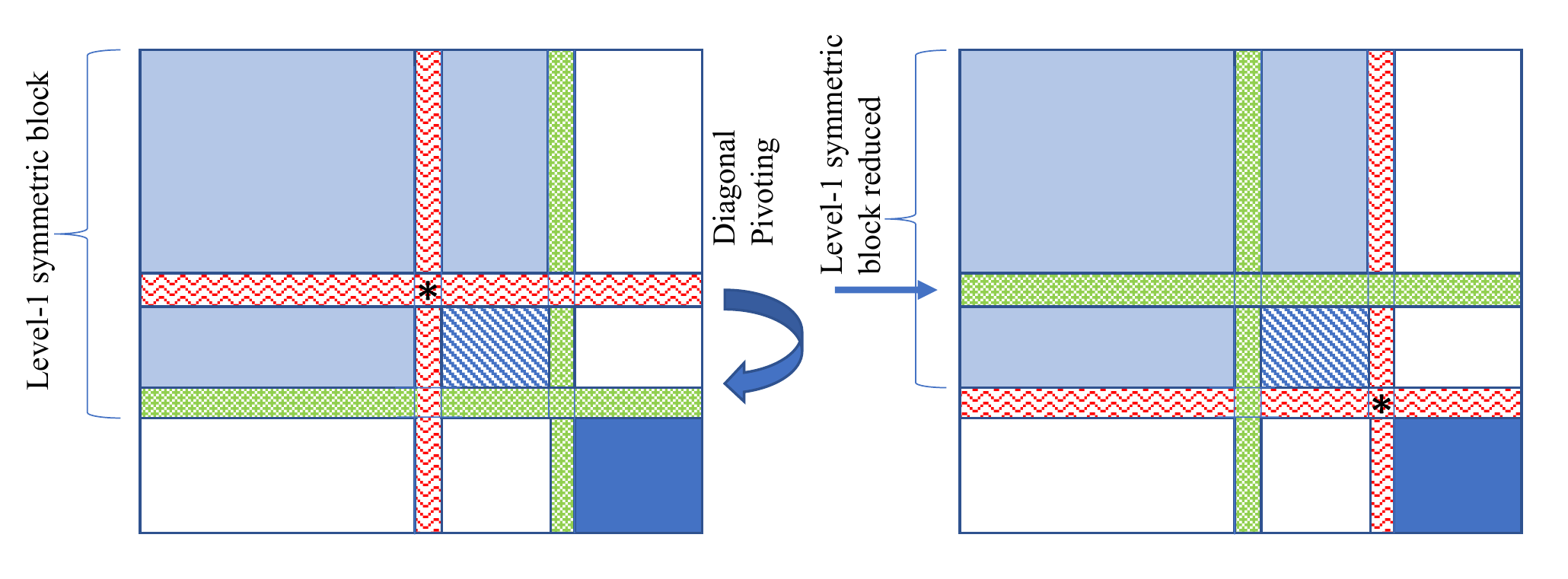}
\par\end{centering}
\caption{{\small{}\label{fig:Diagonal-pivoting}Illustration of diagonal pivoting
for level-$1$ symmetric block. It removes }``problematic'' rows
and columns from the current level and merge them with the block for
more robust processing in the next level.}
\end{figure}

\subsection{Inverse-Based Thresholding}

For incomplete factorization to be effective, it must balance two
important factors: first, it must control the numbers of nonzeros
in $\vec{L}$ and $\vec{U}$, which ideally should be linear in the
input size. This requires dropping ``insignificant'' nonzeros in
$\vec{L}$ and $\vec{U}$. Second, as shown above, it is important
to control $\left\Vert \vec{D}^{-1}\right\Vert \left\Vert \vec{L}^{-1}\right\Vert \left\Vert \vec{U}^{-1}\right\Vert $,
which determine the spectral radius of the preconditioned matrix.
For this reason, the traditional dropping criteria, such as those
based on the level in the elimination tree or the magnitude of the
entries, are often ineffective. A robust approach is the inverse-based
thresholding proposed in \cite{bollhofer2003robust}. We adopt this
approach, but we pay special attention to achieve optimal time complexity
in terms of the number of fills and the computational cost.

The inverse-based thresholding can be motivated by an argument similar
to, but more detailed than, the analysis in the preceding subsection.
In particular, let 
\begin{align}
\vec{P}^{T}\hat{\vec{A}}\vec{Q} & =\left(\vec{L}+\vec{\delta}_{L}\right)\left(\vec{D}+\vec{\delta}_{D}\right)\left(\vec{U}+\vec{\delta}_{U}\right)\nonumber \\
 & =\vec{L}\vec{D}\vec{U}+\vec{\delta}_{L}\vec{D}\vec{U}+\vec{L}\vec{D}\vec{\delta}_{U}+\vec{L}\vec{\delta}_{D}\vec{U}+\text{h.o.t.},\label{eq:dropping-terms}
\end{align}
where we omit the higher-order terms that involve more than one $\vec{\delta}$
matrix. Let $\vec{M}=\vec{L}\vec{D}\vec{U}$ be a right preconditioner
of $\vec{P}^{T}\hat{\vec{A}}\vec{Q}$. Similar to (\ref{eq:error-bounds}),
\begin{align}
\rho\left(\vec{\delta}_{A}\vec{M}^{-1}\right) & \approx\rho\left(\left(\vec{\delta}_{L}\vec{D}\vec{U}+\vec{L}\vec{D}\vec{\delta}_{U}+\vec{L}\vec{\delta}_{D}\vec{U}\right)\vec{M}^{-1}\right)\nonumber \\
 & \leq\left\Vert \vec{\delta}_{L}\vec{L}^{-1}+\vec{L}\vec{D}\vec{\delta}_{U}\vec{U}^{-1}\vec{D}^{-1}\vec{L}^{-1}+\vec{L}\vec{\delta}_{D}\vec{D}^{-1}\vec{L}^{-1}\right\Vert \nonumber \\
 & \leq\left\Vert \vec{\delta}_{L}\vec{L}^{-1}\right\Vert +\kappa(\vec{M})\left\Vert \vec{U}^{-1}\vec{\delta}_{U}\right\Vert +\kappa(\vec{L})\left\Vert \vec{\delta}_{D}\vec{D}^{-1}\right\Vert .\label{eq:spectral-radius-bound}
\end{align}
When deciding whether to drop a specific $\ell_{ik}$, consider $\vec{\delta}{}_{ik}=\ell_{ik}\hat{\vec{e}}_{i}\hat{\vec{e}}_{k}^{T}$,
where $i\geq k$ and $\hat{\vec{e}}_{i},\hat{\vec{e}}_{k}\in\mathbb{R}^{n}$.
Then,
\[
\left\Vert \vec{\delta}{}_{ik}\vec{L}^{-1}\right\Vert =\left\Vert \ell_{ik}\hat{\vec{e}}_{i}\hat{\vec{e}}_{k}^{T}\vec{L}^{-1}\right\Vert =\left|\ell_{ik}\right|\left\Vert \hat{\vec{e}}_{k}^{T}\vec{L}^{-1}\right\Vert =\left|\ell_{ik}\right|\left\Vert \vec{e}_{k}^{T}\vec{L}_{k}^{-1}\right\Vert ,
\]
where $\vec{e}_{k}\in\mathbb{R}^{k}$. Instead of estimating the 2-norm,
we can estimate the $\infty$-norm as 
\[
\left\Vert \vec{e}_{k}^{T}\vec{L}_{k}^{-1}\right\Vert _{\infty}=\sup_{c_{i}=\pm1}\left|\vec{e}_{k}^{T}\vec{L}_{k}^{-1}\vec{c}\right|\gtrapprox\left|\vec{e}_{k}^{T}\vec{L}_{k}^{-1}\vec{c}_{L,k}\right|=\tilde{\kappa}_{L,k},
\]
where $\vec{c}_{L,k}$ and $\tilde{\kappa}_{L,k}$ were defined in
Section~\ref{subsec:Diagonal-Pivoting}. Similarly, let $\vec{\delta}{}_{kj}=u_{kj}\hat{\vec{e}}_{k}\hat{\vec{e}}_{j}^{T}$,
we then have 
\[
\left\Vert \vec{U}^{-1}\vec{\delta}{}_{kj}\right\Vert =\left\Vert \vec{U}^{-1}\left(u_{ki}\hat{\vec{e}}_{k}\hat{\vec{e}}_{j}^{T}\right)\right\Vert =\left|u_{kj}\right|\left\Vert \vec{e}_{k}^{T}\vec{U}_{k}^{-T}\right\Vert ,
\]
and $\left\Vert \vec{e}_{k}^{T}\vec{U}_{k}^{-T}\right\Vert _{\infty}\gtrapprox\left|\vec{e}_{k}^{T}\vec{U}_{k}^{-T}\vec{c}_{U,k}\right|=\tilde{\kappa}_{U,k}$.
Therefore, as a heuristic, we drop $\ell_{ik}$ if
\[
\left|\ell_{ik}\right|\tilde{\kappa}_{L,k}\leq\tau_{L}
\]
 and drop $u_{kj}$ if 
\[
\left|u_{kj}\right|\tilde{\kappa}_{U,k}\leq\tau_{U}
\]
for some $\tau_{L}$ and $\tau_{U}$. This is referred to as the \emph{inverse-based
thresholding} \cite{li2003crout,bollhofer2003robust}. To control
the time complexity of the algorithm, we also limit the numbers of
nonzeros in $\vec{\ell}_{k}$ and $\vec{u}_{k}^{T}$ to be within
constant factors (specifically, $\alpha_{L}$ and $\alpha_{U}$) of
the numbers of nonzeros in the corresponding row and column in $\vec{P}^{T}\hat{\vec{A}}\vec{Q}$,
and drop the excess nonzeros even if their values are above the threshold
$\tau_{L}/\tilde{\kappa}_{L,k}$ and $\tau_{U}/\tilde{\kappa}_{U,k}$.

This dropping technique can be easily incorporated into the modified
Crout update procedure, in that $\tilde{\kappa}_{L,k}$ and $\tilde{\kappa}_{U,k}$
have already been computed. According to (\ref{eq:spectral-radius-bound}),
assuming $\vec{M}$ is used as a right-preconditioner, the spectral
radius is the most sensitive to $\vec{\delta}_{U}$, followed by $\vec{\delta}_{D}$
and $\vec{\delta}_{L}$ in that order. To reduce $\vec{\delta}_{D}$,
we update $\vec{D}$ before applying dropping to $\vec{\ell}_{k}$
and $\vec{u}_{k}^{T}$ in the modified Crout update, as we mentioned
Section~\ref{subsec:Diagonal-Pivoting}. Furthermore, to compensate
the sensitivity of $\vec{\delta}_{U}$, it is advantageous for $\tau_{U}\leq\tau_{L}$
and $\alpha_{U}\geq\alpha_{L}$. For a symmetric leading block, we
let $\tau_{L}=\tau_{U}$ and $\alpha_{L}=\alpha_{U}$. 

\subsection{Hybrid Schur Complement}

Another key component is the computation of the Schur complement,
which will be factorized in the next level. From (\ref{eq:factorization-level-1}),
the Schur complement is defined as
\begin{align}
\vec{S}_{C} & =\left[\begin{array}{cc}
-\vec{L}_{E}\vec{L}_{B}^{-1} & \vec{I}\end{array}\right]\begin{bmatrix}\tilde{\vec{B}} & \tilde{\vec{F}}\\
\tilde{\vec{E}} & \hat{\vec{C}}
\end{bmatrix}\left[\begin{array}{c}
-\vec{U}_{B}^{-1}\vec{U}_{F}\\
\vec{I}
\end{array}\right]\label{eq:Schur complement S-version}\\
 & =\hat{\vec{C}}-\vec{L}_{E}\vec{D}_{B}\vec{U}_{F}.
\end{align}
Note that $\hat{\vec{C}}$ was permuted and scaled, but it involves
no dropping. However, $\tilde{\vec{B}}$, $\tilde{\vec{E}}$, and
$\tilde{\vec{F}}$ do involve dropping, and the above definition does
not take into account the effect of dropping on the preconditioned
matrix. Let $\vec{M}^{-1}=\begin{bmatrix}\tilde{\vec{B}} & \tilde{\vec{F}}\\
\tilde{\vec{E}} & \hat{\vec{C}}
\end{bmatrix}$, and $\vec{P}^{T}\hat{\vec{A}}\vec{P}\vec{M}^{-1}=\begin{bmatrix}\hat{\vec{B}} & \hat{\vec{F}}\\
\hat{\vec{E}} & \hat{\vec{C}}
\end{bmatrix}\begin{bmatrix}\tilde{\vec{B}} & \tilde{\vec{F}}\\
\tilde{\vec{E}} & \hat{\vec{C}}
\end{bmatrix}^{-1}$. The errors in $\vec{\delta}_{B}=\hat{\vec{B}}-\tilde{\vec{B}}$
can be magnified by both $\vec{L}_{B}^{-1}$ and $\vec{U}_{B}^{-1}$,
as we shall show shortly. Hence, we introduce a modified formula for
the Schur complement, 
\begin{align}
\vec{H}_{C} & =\vec{C}-2\vec{L}_{E}\vec{D}_{B}\vec{U}_{F}+\vec{L}_{E}\vec{L}_{B}^{-1}\hat{\vec{B}}\vec{U}_{B}^{-1}\vec{U}_{F}\label{eq:modified-Schur-complement}\\
 & =\vec{S}_{C}+\vec{L}_{E}\left(\vec{L}_{B}^{-1}\hat{\vec{B}}\vec{U}_{B}^{-1}-\vec{D}_{B}\right)\vec{U}_{F},
\end{align}
where the second term eliminates this error term due to $\vec{\delta}_{B}$.

To derive (\ref{eq:modified-Schur-complement}), let us first define
a preconditioner $\hat{\vec{M}}$ as
\[
\hat{\vec{M}}=\begin{bmatrix}\vec{I} & 0\\
\vec{L}_{E}\vec{L}_{B}^{-1} & \vec{I}
\end{bmatrix}\begin{bmatrix}\tilde{\vec{B}} & 0\\
0 & \vec{T}_{C}
\end{bmatrix}\begin{bmatrix}\vec{I} & \vec{U}_{B}^{-1}\vec{U}_{F}\\
0 & \vec{I}
\end{bmatrix},
\]
where $\vec{T}_{C}$ has yet to be defined. Then, 
\begin{align*}
\rho\left(\vec{P}^{T}\hat{\vec{A}}\vec{P}\hat{\vec{M}}^{-1}\right) & =\rho\left(\begin{bmatrix}\vec{I} & 0\\
\vec{L}_{E}\vec{L}_{B}^{-1} & \vec{I}
\end{bmatrix}^{-1}\begin{bmatrix}\hat{\vec{B}} & \hat{\vec{F}}\\
\hat{\vec{E}} & \hat{\vec{C}}
\end{bmatrix}\begin{bmatrix}\vec{I} & \vec{U}_{B}^{-1}\vec{U}_{F}\\
0 & \vec{I}
\end{bmatrix}^{-1}\begin{bmatrix}\tilde{\vec{B}} & 0\\
0 & \vec{T}_{C}
\end{bmatrix}^{-1}\right)\\
 & =\rho\left(\begin{bmatrix}\vec{I} & 0\\
-\vec{L}_{E}\vec{L}_{B}^{-1} & \vec{I}
\end{bmatrix}\begin{bmatrix}\hat{\vec{B}} & \hat{\vec{F}}\\
\hat{\vec{E}} & \hat{\vec{C}}
\end{bmatrix}\begin{bmatrix}\vec{I} & -\vec{U}_{B}^{-1}\vec{U}_{F}\\
0 & \vec{I}
\end{bmatrix}\begin{bmatrix}\tilde{\vec{B}} & 0\\
0 & \vec{T}_{C}
\end{bmatrix}^{-1}\right)\\
 & =\rho\left(\begin{bmatrix}\hat{\vec{B}} & \hat{\vec{\delta}}_{F}\\
\hat{\vec{\delta}}_{E} & \hat{\vec{T}}_{C}
\end{bmatrix}\begin{bmatrix}\tilde{\vec{B}} & 0\\
0 & \vec{T}_{C}
\end{bmatrix}^{-1}\right).
\end{align*}
where $\hat{\vec{\delta}}_{E}$ and $\hat{\vec{\delta}}_{F}$ correspond
to the effect of dropping in $\hat{\vec{E}}$ and $\hat{\vec{F}}$,
respectively. Suppose $\hat{\vec{\delta}}_{E}$ and $\hat{\vec{\delta}}_{F}$
are small, and the above spectral radius is approximately minimized
if $\hat{\vec{T}}_{C}\vec{T}_{C}^{-1}=\vec{I}$, i.e.,
\begin{align}
\vec{T}_{C}=\hat{\vec{T}}_{C} & =\left[\begin{array}{cc}
-\vec{L}_{E}\vec{L}_{B}^{-1} & \vec{I}\end{array}\right]\begin{bmatrix}\hat{\vec{B}} & \hat{\vec{F}}\\
\hat{\vec{E}} & \hat{\vec{C}}
\end{bmatrix}\left[\begin{array}{c}
-\vec{U}_{B}^{-1}\vec{U}_{F}\\
\vec{I}
\end{array}\right]\label{eq:Schur complement T-version}\\
 & =\vec{L}_{E}\vec{L}_{B}^{-1}\hat{\vec{B}}\vec{U}_{B}^{-1}\vec{U}_{F}-\hat{\vec{E}}\vec{U}_{B}^{-1}\vec{U}_{F}-\vec{L}_{E}\vec{L}_{B}^{-1}\hat{\vec{F}}+\hat{\vec{C}}.
\end{align}
This formulation of Schur complement was introduced in \cite{tismenetsky1991new}
and was also used in \cite{Boll06MPC}. Bollhöfer and Saad \cite{Boll06MPC}
referred to (\ref{eq:Schur complement S-version}) and (\ref{eq:Schur complement T-version})
as the S-version and T-version, respectively. However, $\vec{T}_{C}$
is relatively complicated. Note that 
\begin{align}
\vec{T}_{C}-\vec{S}_{C} & =\left[\begin{array}{cc}
-\vec{L}_{E}\vec{L}_{B}^{-1} & \vec{I}\end{array}\right]\begin{bmatrix}\vec{\delta}_{B} & \vec{\delta}_{F}\\
\vec{\delta}_{E} & \vec{0}
\end{bmatrix}\left[\begin{array}{c}
-\vec{U}_{B}^{-1}\vec{U}_{F}\\
\vec{I}
\end{array}\right]\nonumber \\
 & =\vec{L}_{E}\vec{L}_{B}^{-1}\vec{\delta}_{B}\vec{U}_{B}^{-1}\vec{U}_{F}-\vec{\delta}_{E}\vec{U}_{B}^{-1}\vec{U}_{F}-\vec{L}_{E}\vec{L}_{B}^{-1}\vec{\delta}_{F}.\label{eq:error-Schur-complement}
\end{align}
Note that $\vec{\delta}_{E}$ and $\vec{\delta}_{F}$ are multiplied
by $\vec{U}_{B}^{-1}$ and $\vec{L}_{B}^{-1}$, respectively, so their
effects are similar to the droppings in $\vec{L}_{B}$ in (\ref{eq:spectral-radius-bound}).
However, $\vec{\delta}_{B}$ is multiplied by both $\vec{L}_{B}^{-1}$
and $\vec{U}_{B}^{-1}$, which can ``square the condition number.''
Eq.~(\ref{eq:modified-Schur-complement}) adds the leading term in
(\ref{eq:error-Schur-complement}) to $\vec{S}_{C}$ to obtain $\vec{H}_{C}$
and in turn avoids the ``squaring'' effect. Note that 
\[
\vec{H}_{C}=\left[\begin{array}{cc}
-\vec{L}_{E}\vec{L}_{B}^{-1} & \vec{I}\end{array}\right]\begin{bmatrix}\hat{\vec{B}} & \tilde{\vec{F}}\\
\tilde{\vec{E}} & \hat{\vec{C}}
\end{bmatrix}\left[\begin{array}{c}
-\vec{U}_{B}^{-1}\vec{U}_{F}\\
\vec{I}
\end{array}\right],
\]
which can be viewed as a hybrid of $\vec{S}_{C}$ and $\vec{T}_{C}$
in (\ref{eq:Schur complement S-version}) and (\ref{eq:Schur complement T-version}),
respectively. Therefore, we refer to $\vec{H}_{C}$ as the \emph{hybrid
version}, or \emph{H-version}, of the Schur complement. The H-version
has similar accuracy as the T-version, and it is simpler and can be
computed as an update to $\vec{S}_{C}$.

In both the H- and the T-version, the term $\vec{L}_{E}\vec{L}_{B}^{-1}\hat{\vec{B}}\vec{U}_{B}^{-1}\vec{U}_{F}$
can potentially make the Schur complement dense. Hence, we use the
S-version without dropping for almost all levels. Let $n_{C}$ denote
the size of $\hat{\vec{C}}$ at a particular level. If the Schur complement
is small (in particular, $n_{C}=\mathcal{O}(n^{1/3})$) or the S-version
is nearly dense, then we would apply a dense complete LU factorization
with column pivoting to factorize it; otherwise, we factorize $\vec{S}_{C}$
using multilevel ILU recursively. We update the Schur complement to
the H-version if $n_{C}$ is smaller than a user-specified $c_{h}$.
We will address the complexity analysis in more detail in Section~\ref{subsec:Time-Complexity}.
After obtaining the multilevel ILU, one can then use (\ref{eq:multilevel-inverse})
recursively to construct a preconditioner.

\section{Implementation Details and Complexity Analysis\label{sec:Implementation-details}}

The preceding section focused on the robustness of PS-MILU. In this
section, we address its efficient implementation, to achieve optimal
time complexity. More specifically, if the number of nonzeros per
row and per column are bounded by a constant, the algorithm and its
implementation must scale linearly with respect to the input size.
To the best our knowledge, the only methods in the literature that
could achieve linear-time complexity are ILU0 or ILUT without pivoting,
which unfortunately are not robust. As demonstrated in \cite{ghai2017comparison},
multilevel ILU scales nearly linearly for certain classes of problems,
but there was no theoretical complexity analysis in the literature.

To achieve optimal complexity for PS-MILU, we must have a data structure
for sparse matrices that supports efficient sequential access of a
matrix in amortized constant time per nonzero in both row \textsl{and}
column major, while supporting efficient row or column interchanges.
In addition, the costs of thresholding, pivoting, and sorting must
not dominate the intrinsic floating-point operations in the incomplete
LU factorization. In the following, we will describe our data structures,
present the pseudocode, and then prove the optimal time complexity
of our algorithm.

\subsection{An Augmented Sparse Matrix Storage\label{subsec:Data-Structure}}

To facilitate the implementation of PS-MILU, we need an efficient
data structure for sparse matrices, whose storage requirement must
be linear in the number of nonzeros. In the Crout update, we need
to access both rows and columns of $\vec{P}^{T}\hat{\vec{A}}\vec{Q}$,
$\vec{L}$, and $\vec{U}$, and it is critical that the access time
is constant per nonzero in an amortized sense. The standard storage
formats, such as AIJ, CCS (Compressed Column Storage), or CRS (Compressed
Row Storage), are insufficient for this purpose. In addition, we must
facilitate row interchanges in $\vec{L}$ and column interchanges
in $\vec{U}$ efficiently, while taking into account the cache performance
of the Crout update as much as possible. 

To this end, we introduce a data structure that augments the CCS or
CRS formats, which we refer to as \textit{Augmented Compressed Column
Storage }\textit{\emph{(}}AugCCS) and \textit{Augmented Compressed
Row Storage }\textit{\emph{(}}AugCRS), respectively. Like CCS and
CRS, our data structure is array based so that it can be easily implemented
in any programming language, such as MATLAB, FORTRAN, C/C++, etc.
The two versions are interchangeable in functionality, but they may
deliver different cache performance depending on the locality of the
algorithm. We will use AugCCS\textsf{ }and AugCRS\textsf{ }for $\vec{L}$
and $\vec{U}$, respectively, because PS-MILU primarily accesses $\vec{L}$
in columns and primarily accesses $\vec{U}$ in rows. In the following,
we shall describe AugCCS for $\vec{L}$.

Our primary design goals of AugCCS include building $\vec{L}$ incrementally
in columns in the Crout update, efficient sequential access in both
rows and columns, efficient search of row indices, and efficient implementation
of row interchanges. Recall that the standard CCS format has the following
collection of arrays:
\begin{itemize}
\item \textsf{row\_ind:}\textsf{\textbf{ }}An integer array of size equal
to the total number of nonzeros, storing the row indices of the nonzeros
in each column;
\item \textsf{col\_start: }An integer array of size $n+1$, storing the
start index of each column in \textsf{row\_ind}, where \textsf{col\_start{[}k+1{]}-1}
is the end index for the $k$th column;
\item \textsf{val: }A floating-point array of the same size as \textsf{row\_ind},
storing the nonzero values in each column.
\end{itemize}
In CCS, \textsf{row\_ind }and \textsf{val} have the same memory layout
and are accessed based on the same indexing. Because the number of
nonzeros in $\vec{\ell}_{k}$ is not known until the end of step $k$
of Crout update, we cannot determine \textsf{col\_start} \emph{a priori},
so \textsf{col\_start{[}k+1{]}} must be updated incrementally at the
end of the $k$th step. However, for efficiency, we can estimate the
total number of nonzeros in $\vec{L}$ and reserve storage for \textsf{row\_ind}
and \textsf{val} \emph{a priori}, because we control the number of
the nonzeros in $\vec{\ell}_{k}$ to be within a constant factor of
that in its corresponding column in $\vec{P}^{T}\hat{\vec{A}}\vec{P}$.

In CCS, the sorting of the indices in \textsf{row\_ind }is optional.
In AugCCS, we require the indices to be sorted in ascending order.
The sorting may be counterintuitive, because it\textsf{ }would incur
additional cost when appending a column $\vec{\ell}_{k}$ and when
performing row interchanges. Sorting entries in $\vec{\ell}_{k}$
has a lower time complexity compared to computing $\vec{\ell}_{k}$.
In terms of row interchanges, if rows $k$ and $i$ in $\vec{L}$
are interchanged at step $k$ of the Crout update, where $i>k$, we
need to update the sorted indices in the $j$th column in \textsf{row\_ind
}and in \textsf{val} if $\ell_{kj}$ or $\ell_{ij}$ is nonzero. The
total amount of data movement is linear in the number of nonzeros
within these columns, which is asymptotically optimal for pivoting
using a sparse storage format, as we will show in Section~\ref{subsec:Time-Complexity}.
With sorted lists, we can optimize other steps of the algorithm.

A main limitation of CCS is that it does not allow constant-time access
of nonzeros of $\vec{L}$ in row major, which is needed when updating
$\vec{u}_{k}$ using the nonzeros in $\vec{\ell}_{k}^{T}$. In \cite{li2003crout},
the authors dynamically update a linked list of nonzeros in $\vec{\ell}_{k+1}^{T}$
at step $k$ for Crout update without pivoting, but their procedure
would require $k$ comparisons if pivoting is performed, so the overall
time complexity would be quadratic. Another intuitive idea is to store
$\vec{L}$ in both CCS and CRS, but the number of nonzeros in $\vec{\ell}_{k}^{T}$
cannot be bounded \emph{a priori}, unless we limit the number of nonzeros
in each row, in addition to each column, of $\vec{L}$. This can lead
to more droppings and compromise the robustness of the preconditioner.

To overcome the preceding difficulties, we augment the CCS structure
with a linked-list version of CRS, implemented with the following
arrays in addition to \textsf{row\_ind}, \textsf{col\_start}, and\textsf{
val} in the base CCS implementation:
\begin{itemize}
\item \textsf{col\_ind:}\textsf{\textbf{ }}An integer array of size equal
to the maximum number of nonzeros, storing the column index of each
nonzero;
\item \textsf{row\_start:} An integer array of size $n$, storing the start
index of each row in \textsf{col\_ind};
\item \textsf{row\_next: }An integer array of the same size as \textsf{col\_ind},
storing the index in \textsf{col\_ind} for the next nonzero in the
same row;
\item \textsf{row\_end: }An integer array of size $n$, storing the last
index of each row in \textsf{col\_ind};
\item \textsf{val\_pos:} An integer array of the size as \textsf{col\_ind},
storing the index of the nonzeros in \textsf{val} in the base CCS
storage.
\end{itemize}
This linked-list version of CRS above allows easy expansion for each
row after computing $\vec{\ell}_{k}$. When exchanging rows $k$ and
$i$ in $\vec{L}$, besides updating \textsf{row\_ind }and \textsf{val},
we must also update their corresponding indices in \textsf{val\_pos},
and swap \textsf{row\_start{[}k{]} }and \textsf{row\_end{[}k{]} }with
\textsf{row\_start{[}i{]}} and \textsf{row\_end{[}i{]}}, correspondingly.
Updating \textsf{val\_pos }requires only linear time in the total
number of nonzeros in $\vec{\ell}_{k}^{T}\cup\vec{\ell}_{i}^{T}$.
Note that one can further extend AugCCS by replacing the array-based
CCS with its corresponding linked-list version, by adding \textsf{col\_next
}and \textsf{col\_end}; similarly for AugCRS. However, this is not
necessary for implementing PS-MILU.

\subsection{Algorithm Details and Pseudocode\label{subsec:Algorithm-Details}}

We now describe the implementation of the overall algorithm of PS-MILU,
focusing on the function \textsf{psmilu\_factor} in Section~\ref{subsec:Overall-Factorization-Algorithm}
for constructing the multilevel factorization. The output of \textsf{psmilu\_factor
}is a list of structure \textsf{Prec}, of which each entry corresponds
to a level and encapsulates the size information, $\vec{L}_{B}$,
$\vec{D}_{B}$, $\vec{U}_{B}$, $\vec{E}$, $\vec{F}$, $\vec{D}_{r}$,
$\vec{D}_{c}$, $\vec{P}$, and $\vec{Q}^{T}$ for that level. For
the last level, \textsf{Prec} stores the dense complete LU factorization
with pivoting of the Schur complement in \textsf{SC\_plu}, which is
of size 0 if $\vec{S}_{C}$ is empty; for the other levels, \textsf{SC\_plu}
is null. Figure\ \ref{fig:Desp of prec} demonstrates an implementation
of \textsf{Prec} in C99, but it can be easily adapted to any other
programming language. The output of \textsf{psmilu\_factor }will also
be the input to \textsf{psmilu\_solve}, which we will describe in
Section~\ref{subsec:Multilevel-Triangular-Solve}.

\begin{figure}
\begin{verbatim}
struct Prec {
  int     m, n;    // sizes of the leading block B and of A
  CCS     L_B;     // unit lower triangular factor of B
  double *d_B;     // diagonal entries in D_B
  CRS     U_B;     // unit upper triangular factor of B
  CRS     E, F;    // scaled and permuted blocks E and F

  double *s, *t;   // diagonal entries of scaling factors D_r & D_c
  int    *p;       // row permutation vectors of B
  int    *q_inv;   // inverse column permutation vectors of B

  PLU    *SC_plu;  // dense LU with pivoting of S_C in last level
};
\end{verbatim}

\caption{\label{fig:Desp of prec}Definition of data structure \textsf{Prec}
for each level of PS-MILU.}
\end{figure}

\subsubsection{Overall PS-MILU Algorithm\label{subsec:Overall-Factorization-Algorithm}}

We now present the pseudocode of PS-MILU. We refer to the top-level
function as \textsf{psmilu\_factor}, as shown in Algorithm\ \ref{Alg:psmilu_factor},
this function computes the preprocessing, ILU factorization with diagonal
pivoting, and the Schur complement $\vec{S}_{C}$, and recursively
factorizes $\vec{S}_{C}$. This function takes a matrix $\vec{A}$
of size $n\times n$ in CRS format as input. The first $m_{0}\times m_{0}$
leading block of $\vec{A}$ is assumed to be symmetric, where $m_{0}=0$
for a fully nonsymmetric matrix, and $m_{0}=n$ for a fully symmetric
matrix. In general, $m_{0}=0$ when \textsf{psmilu\_factor }is called
recursively on level 2 and onward. The PS-MILU has ten control parameters,
based on the theoretical analysis in Section~\ref{sec:PS-MILU}.
We encapsulate these parameters in a structure \textsf{Options}, as
shown in Figure\ \ref{fig:Desp of options} using C99 syntax. The
comments give the default values in our current implementation. We
also include the size of the input matrix in \textsf{Options}, since
it is needed at all levels. For simplicity, we use the same thresholds
for $\vec{L}$ and $\vec{U}$, although it is advantageous for $\tau_{L}<\tau_{U}$
and $\alpha_{L}<\alpha_{U}$ if it is known \emph{a priori} that the
factorization will be used as a right-preconditioner.

\begin{algorithm}
\caption{\textbf{\noun{\label{Alg:psmilu_factor}}} \textbf{psmilu\_factor}$(\vec{A},m_{0},\text{level},\text{options})$}

Computes PS-MILU of input matrix $\vec{A}$ and returns a list of
Prec

\textbf{input}: $\vec{A}$: original matrix in CRS format

\hspace{1.1cm} $m_{0}$: size of leading symmetric block

\hspace{1.1cm} options: $\tau_{L}$, $\tau_{U}$, $\tau_{\kappa}$,
$\tau_{d}$, $\alpha_{L}$, $\alpha_{U}$, $\alpha_{d}$, $\rho$,
$c_{d}$, $c_{h}$, $N$

\textbf{output}: precs: a list of Prec instances

\begin{algorithmic}[1]

\STATE $m\leftarrow m_{0}$; $n\leftarrow$size of $\vec{A}$

\STATE\textbf{ if} $m>0$ \textbf{then }\hfill \{partially symmetric\}

\STATE\hspace{0.4cm}\label{line:MC64} obtain $\vec{p},\vec{q},\vec{s},\vec{t}$
from symmetric MC64 on $\vec{A}_{1:m,1:m}$ and decrease $m$ if needed

\STATE\hspace{0.4cm} update $\vec{p},\vec{q}$ by symmetric AMD
on $\vec{A}[\vec{p},\vec{q}]$

\STATE\textbf{ else} \hfill \{fully nonsymmetric\}

\STATE\hspace{0.4cm} obtain $\vec{p},\vec{q},\vec{s},\vec{t}$ from
nonsymmetric MC64 on $\vec{A}$

\STATE\hspace{0.4cm} update $\vec{p}$ and $\vec{q}$ by symmetric
AMD on $\vec{A}\left[\vec{p},\vec{q}\right]+\vec{A}^{T}\left[\vec{q},\vec{p}\right]$

\STATE \textbf{end if}

\STATE\label{line:scaling} $\text{\ensuremath{\vec{A}\leftarrow}diag}(\vec{s})\vec{A}\text{diag}(\vec{t})$
\hfill{}\{scale and convert $\vec{A}$ to AugCRS\}

\STATE sym$\leftarrow$$m>0$ \textbf{and} level is 1

\STATE {[}$\vec{L}_{B}$, $\vec{d}_{B}$, $\vec{U}_{B}$, $\vec{L}_{E}$,
$\vec{U}_{F}$, $\vec{p}$, $\vec{q}$, $m${]} $\leftarrow$ \textbf{iludp\_factor}($\vec{A}$,
$\vec{p}$, $\vec{q}$, $m$, sym, options)

\STATE $\vec{E}\leftarrow\vec{A}[\vec{p}_{m+1:n},\vec{q}_{1:m}]$;
$\vec{F}\leftarrow\vec{A}[\vec{p}_{1:m},\vec{q}_{m+1:n}]$

\STATE save $m$, $n$, $\vec{L}_{B}$, $\vec{D}_{B}$, $\vec{U}_{B}$,
$\vec{E}$, $\vec{F}$, $\vec{P}$, $\vec{Q}^{T}$, $\vec{s}$, and
$\vec{t}$ into precs{[}level{]}

\STATE\label{line:S-version}$\vec{S}_{C}\leftarrow\vec{A}[\vec{p}_{m+1:n},\vec{q}_{m+1:n}]-\vec{L}_{E}\text{diag}\left(\vec{d}_{B}\right)\vec{U}_{F}$
\hfill{}\{S-version in CRS\}

\STATE \textbf{if} $\text{nnz}(\vec{S}_{C})\geq\rho(n-m)^{2}$ \textbf{or}
$n-m\leq c_{d}\sqrt[3]{N}$\textbf{ then} \hfill{}\{$\vec{S}_{C}$
is dense or small\}

\STATE\hspace{0.4cm} \textbf{if} $n-m<c_{h}$ \textbf{then}

\STATE\hspace{0.8cm} $\vec{T}_{E}\leftarrow\vec{L}_{E}\vec{L}_{B}^{-1}\left(\vec{A}[\vec{p}_{1:m},\vec{q}_{1:m}]-\text{diag}(\vec{d}_{B})\right)$;
$\vec{T}_{F}\leftarrow\vec{U}_{B}^{-1}\vec{U}_{F}$\hfill{}\{CRS
and CCS\}

\STATE\hspace{0.8cm} \label{line:H-version}$\vec{S}_{C}\leftarrow\vec{S}_{C}+\vec{T}_{E}\vec{T}_{F}$
\hfill{}\{H-version in dense format\}

\STATE\hspace{0.4cm} \textbf{end if}

\STATE\hspace{0.4cm} save dense LU with pivoting of $\vec{S}_{C}$
into precs{[}level{]}

\STATE \textbf{else}

\STATE\hspace{0.4cm} \textbf{psmilu\_factor}($\vec{S}_{C}$, $0$,
level+1, options)

\STATE \textbf{end if}

\end{algorithmic}
\end{algorithm}

\begin{figure}
\begin{verbatim}
struct Options {
  double tau_L;      // inverse-based threshold for L       [0.01]
  double tau_U;      // inverse-based threshold for U       [0.01]
  double tau_d;      // threshold for inverse-diagonal        [10]
  double tau_kappa;  // inverse-norm threshold; default      [100]
  int    alpha_L;    // growth factor of nonzeros per col      [4]
  int    alpha_U;    // growth factor of nonzeros per row      [4]
  double rho;        // density threshold for dense LU      [0.25]
  int    c_d;        // size parameter for dense LU            [1]
  int    c_h;        // size parameter for H-version          [10]
  int    N;          // reference size of matrix         [size(A)]
};
\end{verbatim}

\caption{\label{fig:Desp of options} Definition of control options of PS-MILU
along with their default values.}
\end{figure}

We note a few details in \textsf{psmilu\_factor}. In line \ref{line:MC64},
we need to permute the dense rows and columns out of the leading block
before applying MC64 and permute the $2\times2$ diagonal blocks out
of the block after MC64, and then decrease $m$ accordingly. To access
the permuted matrix, we use $\vec{A}[\vec{p},\vec{q}]$ to denote
$\vec{P}^{T}\vec{A}\vec{Q}$. In line \ref{line:scaling} we scale
the matrix $\vec{A}$ using the row and column scaling vectors from
MC64 and convert $\vec{A}$ into AugCRS format. The scaling is for
clarity of the pseudocode; the code can be adapted to scale $\vec{A}$
when accessing $\vec{A}[\vec{p},\vec{q}]$ in that $\vec{P}^{T}\vec{D}_{r}\vec{A}\vec{D}_{q}\vec{Q}=\text{diag}\left(\vec{s}[\vec{p}]\right)\vec{A}[\vec{p},\vec{q}]\text{diag}\left(\vec{t}[\vec{q}]\right)$.

\begin{algorithm}
\caption{\textbf{\noun{\label{alg:iludp_factor}}} \textbf{iludp\_factor}($\vec{A}$,
$\vec{p}$, $\vec{q}$, $m$, sym, options) }

Compute ILDU (or symmetric ILDL$^{\text{T}}$) with diagonal pivoting

\textbf{input}: $\vec{A}$: input scaled matrix at each level in AugCRS
format

\hspace{1.1cm} $\vec{p},\vec{q}$: row and column permutation vectors
of $\vec{A}$

\hspace{1.1cm} $m$: size of current leading block

\hspace{1.1cm} $\text{options}$: $\tau_{L}$, $\tau_{U}$, $\tau_{\kappa}$,
$\tau_{d}$, $\alpha_{L}$, $\alpha_{U}$

\textbf{output}: $\vec{L}_{B}$, $\vec{d}_{B}$, $\vec{U}_{B}$, $\vec{L}_{E}$,
$\vec{U}_{F}$, $\vec{p}$, $\vec{q}$, $m$

\begin{algorithmic}[1]

\STATE$\vec{d}[\vec{p}]\leftarrow\text{diag}(\vec{A}[\vec{p},\vec{q}])$;
reserve spaces for $\vec{L}$ (AugCCS ), $\vec{U}$ (AugCRS), $\hat{\vec{\ell}}$,
and $\hat{\vec{u}}$

\STATE\textbf{for $k=1,2,\dots m$} \hfill{}\{note that $m$ may
decrease within loop\}

\STATE\hspace{0.4cm} pivot$\leftarrow\vert1/d_{k}\vert>\tau_{d}$

\STATE\hspace{0.4cm}\textbf{ while true}

\STATE\hspace{0.8cm} \textbf{if} pivot\textbf{ then}

\STATE\hspace{1.2cm}\label{line:pivot} $m\leftarrow m-1$ \textbf{while}
$\vert1/d_{m}\vert>\tau_{d}$ and $m<k$

\STATE\hspace{1.2cm} \textbf{break if} $k=m$ \hfill{}\{jump to
line \ref{line:last}\}

\STATE\hspace{1.2cm} swap rows $k$ and $m$ in $\vec{L}$; swap
columns $k$ and $m$ in $\vec{U}$; update $\vec{p}$ and $\vec{q}$

\STATE\hspace{1.2cm} $m\leftarrow m-1$

\STATE\hspace{0.8cm} \textbf{end if}

\STATE\hspace{0.8cm}\label{line:Cront-L} $\hat{\vec{\ell}}\leftarrow\vec{A}[\vec{p}_{k+1:n},q_{k}]-\vec{L}_{k+1:n,1:k-1}\vec{D}_{k-1}\vec{u}_{k}$

\STATE\hspace{0.8cm}\label{line:Crout-U} \textbf{$\hat{\vec{u}}^{T}\leftarrow\vec{A}[p_{k},\vec{q}_{k+1:n}]-\vec{\ell}_{k}^{T}\vec{D}_{k-1}\vec{U}_{1:k-1:k+1:n}$}\hfill{}\{$\hat{\vec{u}}_{k+1:m}=\hat{\vec{\ell}}_{k+1:m}$
if sym\}

\STATE\hspace{0.8cm} estimate $\tilde{\kappa}_{L,k}$ and $\tilde{\kappa}_{U,k}$
by (\ref{eq:bound inv L}) and (\ref{eq:bound inv U}) \hfill{}\{$\tilde{\kappa}_{U,k}=\tilde{\kappa}_{L,k}$
if sym\}

\STATE\hspace{0.8cm} pivot$\leftarrow$$\tilde{\kappa}_{L,k}>\tau_{\kappa}$
\textbf{or} $\tilde{\kappa}_{U,k}>\tau_{\kappa}$

\STATE\hspace{0.8cm} \textbf{continue if} pivot\hfill{}\{jump to
line \ref{line:pivot}\}

\STATE\hspace{0.8cm} $d[p_{i}]\leftarrow d[p_{i}]-d[p_{k}]\hat{\ell}{}_{i}\hat{u}_{i}$
\textbf{for} $k<i\leq m\text{ \textbf{with} }\hat{\ell}{}_{i}\neq0\text{ \textbf{and }}\hat{u}_{i}\neq0$

\STATE\hspace{0.8cm} drop $\hat{\ell}_{i}$ \textbf{if} $\left|\hat{\ell}_{i}\right|\tilde{\kappa}_{L,k}\leq\tau_{L}$
\textbf{for} $k<i\leq n\text{ \textbf{with} }\hat{\ell}{}_{i}\neq0$

\STATE\hspace{0.8cm} drop $\hat{u}_{j}$ \textbf{if} $\left|\hat{u}_{j}\right|\tilde{\kappa}_{U,k}\leq\tau_{U}$
\textbf{for} $k<j\leq n\text{ \textbf{with} }\hat{u}{}_{j}\neq0$

\STATE\hspace{0.8cm} $n_{L}\leftarrow\alpha_{L}\text{ nnz}(\vec{A}[:,q_{k}])$;
$n_{U}\leftarrow\alpha_{U}\text{ nnz}(\vec{A}[p_{k},:])$

\STATE\hspace{0.8cm}\label{line:sort-L}\textbf{ }find largest $n_{L}$
nonzero entries in $\hat{\vec{\ell}}$ and append sorted list to $\vec{L}$

\STATE\hspace{0.8cm}\textbf{ }find largest $n_{U}$ nonzero entries
in $\hat{\vec{u}}^{T}$ and append sorted list to $\vec{U}$

\STATE\hspace{0.8cm}\textbf{ break}

\STATE\hspace{0.4cm} \textbf{end while}

\STATE\textbf{end for}

\STATE\label{line:last}$\vec{L}_{B}\leftarrow\vec{L}_{1:m,:}$;
$\vec{d}_{B}\leftarrow\vec{d}[\vec{p}_{1:m}]$; $\vec{U}_{B}\leftarrow\vec{U}_{:,1:m}$;
$\vec{L}_{E}\leftarrow\vec{L}_{m+1:n,:}$; $\vec{U}_{F}\leftarrow\vec{U}_{:,m+1:n}$

\end{algorithmic}
\end{algorithm}

The core of \textsf{psmilu\_factor} is \textsf{iludp\_factor}, which
is outlined in Algorithm~\ref{alg:iludp_factor}. The function \textsf{iludp\_factor
}takes a scaled matrix in AugCRS format, along with the permutation
vectors $\vec{p}$ and $\vec{q}$, the size of the leading block $m$,
and additional options. The function will perform ILDL$^{\text{T}}$
on the leading block if the argument \textsf{sym} is true. We note
a few of its implementation details for $\vec{L}$; the same apply
for $\vec{U}$ symmetrically. First, in line \ref{line:Cront-L},
we need to maintain the starting row indices of the nonzeros in each
column of $\vec{L}_{k:m,1:k-1}$. To this end, we maintain an integer
array \textsf{L\_start} of size $n$, whose $j$th entry contain the
index into \textsf{row\_ind} for the first nonzero in $\vec{L}_{k:n,j}$
for $j<k$. Updating these indices only requires $\mathcal{O}\left(\text{nnz}\left(\vec{\ell}_{k}^{T}\right)\right)$
operations at the $k$th step. Second, in line \ref{line:Cront-L},
we store $\hat{\vec{\ell}}$ as a dense array of size $m$, and build
the set of row indices of its nonzeros. The thresholding procedure
can then be computed in $\mathcal{O}\left(\text{nnz}\left(\hat{\vec{\ell}}\right)\right)$
operations, and sorting the nonzeros in line~\ref{line:sort-L} would
then take $\mathcal{O}\left(\text{nnz}\left(\vec{\ell}_{k}\right)\log\left(\text{nnz}\left(\vec{\ell}_{k}\right)\right)\right)$
operations after dropping. This is illustrated in the right panel
of Figure~\ref{fig:Illustration-update}. Third, for row interchanges
in $\vec{L}$, we also take advantage of \textsf{L\_start} to locate
the nonzeros in $\vec{L}_{k:m,1:k-1}$, and then update the AugCCS
format as described in Section~\ref{subsec:Data-Structure}. Finally,
for efficiency, all the storage is preallocated outside the loops.

After obtaining the ILU factorization with diagonal pivoting, we compute
the Schur complement and then factorize it. We first compute the S-version
in line \ref{line:S-version} of Algorithm~\ref{Alg:psmilu_factor}.
Like $\vec{A}$, $\vec{S}_{C}$ is first stored in CRS format, which
we compute similarly to the Crout update of $\vec{U}$ without dropping
and pivoting. We factorize the Schur complement using \textsf{psmilu\_factor}
recursively or using a dense complete factorization based on its size.
If a dense factorization is used, we update the S-version to the H-version
if the size of $\vec{S}_{C}$ is smaller than a constant.

\subsubsection{\label{subsec:Multilevel-Triangular-Solve}Multilevel Triangular
Solves}

From \textsf{psmilu\_factor}, we obtain a list of \textsf{Prec} instances,
which define a multilevel preconditioner. Let $\hat{\vec{A}}=\vec{P}^{T}\vec{D}_{r}\vec{A}\vec{D}_{c}\vec{Q}$,
where $\vec{P}$, $\vec{Q}$, $\vec{D}_{r}$, and $\vec{D}_{c}$ are
the permutation and scaling matrices from \textsf{psmilu\_factor}.
Similar to (\ref{eq:multilevel-inverse}), we can define a preconditioner
$\vec{M}$ corresponding to $\hat{\vec{A}}$ as

\begin{equation}
\vec{M}^{-1}=\begin{bmatrix}\tilde{\vec{B}} & \vec{0}\\
\vec{0} & \vec{0}
\end{bmatrix}+\begin{bmatrix}-\tilde{\vec{B}}^{-1}\vec{F}\\
\vec{I}
\end{bmatrix}\tilde{\vec{S}}_{C}^{-1}\begin{bmatrix}-\vec{E}\tilde{\vec{B}}^{-1} & \vec{I}\end{bmatrix},\label{eq:precond-permuted-scaled}
\end{equation}
where $\tilde{\vec{B}}=\vec{L}_{B}\vec{D}_{B}\vec{U}_{B}$, $\vec{E}$
and $\vec{F}$ are scaled and permuted blocks in $\hat{\vec{A}}$,
and $\tilde{\vec{S}}_{C}^{-1}$ is the multilevel preconditioner for
the Schur complement. Since $\tilde{\vec{B}}^{-1}$ is computed using
triangular solves and $\tilde{\vec{S}}_{C}^{-1}$ uses triangular
solves recursively, we refer to the procedure of computing $\vec{M}^{-1}\vec{b}$
as \emph{multilevel triangular solve }for $\vec{b}\in\mathbb{R}^{n}$. 

Although (\ref{eq:multilevel-inverse}) and (\ref{eq:precond-permuted-scaled})
are convenient mathematically, for user-friendliness the permutation
and scaling matrices should be transparent to the user. To this end,
we define a preconditioner, denoted as $\tilde{\vec{M}}$, with respect
to $\vec{A}$ instead of $\hat{\vec{A}}$. Let
\begin{equation}
\tilde{\vec{M}}^{-1}=\vec{D_{c}}\vec{Q}\vec{M}^{-1}\vec{P}^{T}\vec{D_{r}}.\label{eq:multilevel-preconditioner}
\end{equation}
Algorithm\ \ref{alg:Computing solve_milu} outlines the function
\textsf{psmilu\_solve} for evaluate $\vec{y}=\tilde{\vec{M}}^{-1}\vec{b}$.
Its input is a list of Precs instances, the current level, and a vector
$\vec{b}$. It is a recursive function, where $\tilde{\vec{S}}_{C}^{-1}$
is either computed recursively or solved using the dense factorization.

\begin{algorithm}
\caption{\label{alg:Computing solve_milu}\textbf{psmilu\_solve}(precs, level,
\textbf{$\vec{b}$})}

Computing $\vec{y}=\tilde{\vec{M}}^{-1}\vec{b}$, where $\tilde{\vec{M}}^{-1}$
is defined in (\ref{eq:multilevel-preconditioner})

\textbf{input}: precs: list of Prec instances

\hspace{1.1cm} level: current level

\hspace{1.1cm} $\vec{b}$: arbitrary vector

\textbf{output: $\vec{y}=\tilde{\vec{M}}^{-1}\vec{b}$ }

\textbf{local}: $m$, $n$, $\vec{L}_{B}$, $\vec{D}_{B}$, $\vec{U}_{B}$,
$\vec{E}$, $\vec{F}$, $\vec{p}$, $\vec{q}_{\text{inv}}$, $\vec{s}$,
$\vec{t}$, $\vec{S}_{C}^{-1}$ as aliases of those in precs{[}level{]}

\begin{algorithmic}[1]

\STATE $\vec{b}\leftarrow\vec{s}[\vec{p}].*\vec{b}[\vec{p}]$\textbf{
}\hfill\{element-wise multiplication\}

\STATE \textbf{$\vec{y}_{1:m}\leftarrow\vec{U}_{B}^{-1}\vec{D}_{B}^{-1}\vec{L}_{B}^{-1}\vec{b}_{1:m}$}

\STATE \textbf{$\vec{y}_{m+1:n}\leftarrow\vec{b}_{m+1:n}-\vec{E}\vec{y}_{1:m}$}

\STATE \textbf{if} $\vec{S}_{C}^{-1}$ is not null \textbf{and} $\text{size}\left(\vec{S}_{C}^{-1}\right)=n-m$
\textbf{then }\hfill\{dense factorization stored\}

\STATE\hspace{0.4cm}\textbf{ $\vec{y}_{m+1:n}\leftarrow\vec{S}_{C}^{-1}\vec{y}_{m+1:n}$}\hfill\{use
dense solves\}

\STATE \textbf{else}

\STATE\hspace{0.4cm}\textbf{ $\vec{y}_{m+1:n}\leftarrow$ psmilu\_solve}(precs,
level+1, $\vec{y}_{m+1:n}$)

\STATE \textbf{end if}

\STATE $\vec{y}_{1:m}\leftarrow\vec{y}_{1:m}-\vec{F}\vec{y}_{m+1:n}$

\STATE \textbf{$\vec{y}_{1:m}\leftarrow\vec{U}_{B}^{-1}\vec{D}_{B}^{-1}\vec{L}_{B}^{-1}\vec{y}_{1:m}$}

\STATE $\vec{y}\leftarrow\vec{t}.*\vec{y}[\vec{q}_{\text{inv}}]$\textbf{
}\hfill\{element-wise multiplication\}

\end{algorithmic}
\end{algorithm}

\subsection{Time Complexity Analysis\label{subsec:Time-Complexity}}

We now analyze the time complexity of PS-MILU. Typically, its overall
cost is dominated by the ILU factorization with diagonal pivoting
in the first level. Let \textsf{nnz() }denote a function for obtaining
the number of nonzeros in a matrix or vector. We first prove the following
lemma regarding the function \textsf{ilupd\_factor}.
\begin{lem}
\label{lem:time-complexity} Given a preprocessed matrix $\hat{\vec{A}}\in\mathbb{R}^{n\times n}$,
let $\vec{P}\in\mathbb{N}^{n\times n}$, $\vec{Q}\in\mathbb{N}^{n\times n}$,
$\vec{L}\in\mathbb{R}^{n\times m}$, $\vec{D}\in\mathbb{R}^{m\times m}$
and $\vec{U}\in\mathbb{R}^{m\times n}$ be the output of \textsf{ilupd\_factor}.
Suppose the number of diagonal pivoting for each diagonal entry is
bounded by a constant. The number of floating point operations in
Crout update is 
\begin{equation}
\mathcal{O}\left(\text{nnz}\left(\vec{L}+\vec{U}\right)\left(\max_{i\leq m}\left\{ \text{nnz}\left(\vec{a}_{i}^{T}\right)\right\} +\max_{j\leq m}\left\{ \text{nnz}\left(\vec{a}_{j}\right)\right\} \right)\right),\label{eq:bound-operations}
\end{equation}
so is the overall time complexity of \textsf{ilupd\_factor}, where
$\vec{a}_{i}^{T}$ and $\vec{a}_{j}$ denote the $i$th row and $j$th
column of $\vec{P}^{T}\hat{\vec{A}}\vec{Q}$, respectively.
\end{lem}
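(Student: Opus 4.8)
The plan is to isolate the single dominant operation in \textsf{iludp\_factor}---the sparse rank-one updates forming $\hat{\vec{\ell}}$ and $\hat{\vec{u}}^T$ in lines~\ref{line:Cront-L}--\ref{line:Crout-U}---bound its total cost by a double-counting argument, and then verify that every remaining operation is subdominant. At step $k$, forming $\hat{\vec{\ell}}$ reads column $q_k$ of $\vec{P}^T\hat{\vec{A}}\vec{Q}$ and then, for every already-computed index $j<k$ with $u_{jk}\neq0$, subtracts a scaled copy of the stored column $\vec{\ell}_j$; its cost is therefore $\mathcal{O}\big(\text{nnz}(\vec{a}_k)+\sum_{j<k,\,u_{jk}\neq0}\text{nnz}(\vec{\ell}_j)\big)$, and forming $\hat{\vec{u}}^T$ is symmetric with $\ell_{kj}$ and $\vec{u}_j^T$ in the analogous roles. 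Summing over $k$ and exchanging the order of summation, the cross term becomes $\sum_j \text{nnz}(\vec{\ell}_j)\,\text{nnz}(\vec{u}_j^T)$, since row $j$ of $\tilde{\vec{U}}$ is used exactly in those steps $k$ for which $u_{jk}\neq0$, of which there are $\text{nnz}(\vec{u}_j^T)$.

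The core estimate then follows from the nonzero budget imposed by inverse-based thresholding. Since Algorithm~\ref{alg:iludp_factor} caps $\text{nnz}(\vec{u}_j^T)\le\alpha_U\,\text{nnz}(\vec{a}_j^T)\le\alpha_U\max_{i\le m}\text{nnz}(\vec{a}_i^T)$, I would factor out the maximum and use $\sum_j\text{nnz}(\vec{\ell}_j)=\text{nnz}(\vec{L})$ to obtain $\sum_j \text{nnz}(\vec{\ell}_j)\,\text{nnz}(\vec{u}_j^T)=\mathcal{O}\big(\text{nnz}(\vec{L})\max_{i\le m}\text{nnz}(\vec{a}_i^T)\big)$; the symmetric analysis of the $\vec{u}$-updates gives $\mathcal{O}\big(\text{nnz}(\vec{U})\max_{j\le m}\text{nnz}(\vec{a}_j)\big)$. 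The linear read terms $\sum_k(\text{nnz}(\vec{a}_k)+\text{nnz}(\vec{a}_k^T))$ are dominated by these, since $\text{nnz}(\vec{L}+\vec{U})\ge m$ while each of $\max_{i\le m}\text{nnz}(\vec{a}_i^T)$ and $\max_{j\le m}\text{nnz}(\vec{a}_j)$ is at least $1$. Adding the two contributions and using $\text{nnz}(\vec{L}),\text{nnz}(\vec{U})\le\text{nnz}(\vec{L}+\vec{U})$ yields exactly~(\ref{eq:bound-operations}).

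Next I would check that the remaining per-step work is absorbed into the same bound. The incremental condition-number estimates $\tilde{\kappa}_{L,k},\tilde{\kappa}_{U,k}$ and the pre-update of $\vec{D}$ touch only the freshly computed nonzeros, costing $\mathcal{O}(\text{nnz}(\hat{\vec{\ell}})+\text{nnz}(\hat{\vec{u}}))$, which is itself bounded by the update cost already counted. The thresholding and the selection of the largest $n_L$ (resp.\ $n_U$) entries can be done by linear-time selection in $\mathcal{O}(\text{nnz}(\hat{\vec{\ell}}))$, and the final sort of the retained entries costs $\sum_k\text{nnz}(\vec{\ell}_k)\log\text{nnz}(\vec{\ell}_k)$; since $\text{nnz}(\vec{\ell}_k)\le\alpha_L\max_{j\le m}\text{nnz}(\vec{a}_j)$ and $\log x\le x$, this is $\mathcal{O}(\text{nnz}(\vec{L})\max_{j\le m}\text{nnz}(\vec{a}_j))$, again within~(\ref{eq:bound-operations}).

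The delicate step, and the one I expect to be the main obstacle, is bounding the pivoting overhead. Here I would invoke the hypothesis that each diagonal entry is pivoted a bounded number of times: this makes the total number of interchanges $\mathcal{O}(m)$ and multiplies the recomputation cost by only a constant, so the multiplication bound is unaffected. The subtle part is the maintenance of the sorted AugCCS/AugCRS during a swap of rows $k$ and $m$, whose cost is linear in the nonzeros of the affected columns (Section~\ref{subsec:Data-Structure}). I would argue that, because the boundary index $m$ decreases monotonically and each row is the ``current'' row $k$ for only $\mathcal{O}(1)$ pivots, every row and column participates in a constant number of interchanges, so that summing the per-swap cost telescopes to $\mathcal{O}(\text{nnz}(\vec{L})+\text{nnz}(\vec{U}))$, which is subdominant to~(\ref{eq:bound-operations}). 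Verifying that the sorted structure can be repaired within this budget---rather than requiring a full re-sort of each affected column---is the crux, and it is precisely what the augmented storage of Section~\ref{subsec:Data-Structure} is designed to guarantee.
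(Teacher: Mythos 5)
Your proposal is correct and follows essentially the same route as the paper's proof: the same decomposition into Crout update, pivoting, and thresholding/sorting, the same double-counting of the sparse rank-one updates against the nonzero caps $\alpha_{L},\alpha_{U}$ (you merely pull the maximum out of the other factor, obtaining $\text{nnz}(\vec{L})\max_{i}\text{nnz}(\vec{a}_{i}^{T})$ where the paper writes $\text{nnz}(\vec{U})\max_{j}\text{nnz}(\vec{a}_{j})$ --- both sit inside the symmetric bound (\ref{eq:bound-operations})), and the same absorption of the sorting cost via $\log x\leq x$. The one quantitative overstatement is your claim that the sorted-structure repair during pivoting ``telescopes to $\mathcal{O}(\text{nnz}(\vec{L})+\text{nnz}(\vec{U}))$'': repairing a sorted column $j$ after swapping rows $k$ and $i$ costs $\mathcal{O}(\text{nnz}(\vec{L}_{k:i,j}))$, not constant per touched entry of rows $k$ and $i$, so the honest total is $\mathcal{O}\left(\text{nnz}(\vec{L})\max_{j\leq m}\text{nnz}(\vec{a}_{j})\right)$ --- the paper's $\mathcal{O}(N_{L})$ --- which is still subdominant to (\ref{eq:bound-operations}), so your conclusion is unaffected.
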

\begin{proof}
In \textsf{ilupd\_factor}, there are three main components: Crout
update, diagonal pivoting, and thresholding. First, consider the Crout
update without diagonal pivoting. At the $k$th step, the number of
floating-point operations of updating $\vec{\ell}_{k}$ is $2\sum_{\{i\mid i<k\text{ and }u_{ik}\neq0\}}\text{nnz}\left(\vec{L}_{k+1:n,i}\right)$,
which includes one multiplication and one addition per nonzero. The
total number of operations for computing $\vec{L}$ is 
\begin{align*}
N_{L} & =2\sum_{k=1}^{m}\sum_{\{i\mid i<k\text{ and }u_{ik}\neq0\}}\text{nnz}\left(\vec{L}_{k+1:n,i}\right)\\
 & \leq2\sum_{k=1}^{m}\sum_{\{i\mid i<k\text{ and }u_{ik}\neq0\}}\max_{j\leq m}\left\{ \text{nnz}\left(\vec{\ell}_{j}\right)\right\} \\
 & =2\text{nnz}\left(\vec{U}\right)\max_{j\leq m}\left\{ \text{nnz}\left(\vec{\ell}_{j}\right)\right\} \\
 & \leq2\text{nnz}\left(\vec{U}\right)\max_{j\leq m}\left\{ \text{nnz}\left(\vec{a}_{j}\right)\right\} ,
\end{align*}
where the last inequality is because we bound the number of nonzeros
in each column in $\vec{L}$ by a constant factor of its corresponding
column in $\vec{P}^{T}\hat{\vec{A}}\vec{Q}$. Similarly, the number
of floating operations in computing $\vec{U}$ is 
\[
N_{U}\leq2\text{nnz}\left(\vec{L}\right)\max_{i}\left\{ \text{nnz}\left(\vec{u}_{i}^{T}\right)\right\} \leq2\text{nnz}\left(\vec{L}\right)\max_{i\leq m}\left\{ \text{nnz}\left(\vec{a}_{i}^{T}\right)\right\} .
\]
Furthermore, the overall cost of updating $\vec{D}$ is 
\[
N_{D}\leq\sum_{k=1}^{m}\left(\text{nnz}\left(\vec{\ell}_{k}\right)+\text{nnz}\left(\vec{u}_{k}^{T}\right)\right)=\text{\text{nnz}\ensuremath{\left(\vec{L}+\vec{U}\right)}}.
\]
Hence, the total number of floating operations of Crout update is
\begin{align}
N_{L}+N_{U}+N_{D} & =\mathcal{O}\left(\text{nnz}\left(\vec{L}\right)\max_{i\leq m}\left\{ \text{nnz}\left(\vec{a}_{i}^{T}\right)\right\} +\text{nnz}\left(\vec{U}\right)\max_{j\leq m}\left\{ \text{nnz}\left(\vec{a}_{j}\right)\right\} \right)\label{eq:Crout-bound}\\
 & \leq\mathcal{O}\left(\text{nnz}\left(\vec{L}+\vec{U}\right)\left(\max_{i\leq m}\left\{ \text{nnz}\left(\vec{a}_{i}^{T}\right)\right\} +\max_{j\leq m}\left\{ \text{nnz}\left(\vec{a}_{j}\right)\right\} \right)\right).\label{eq:Crout-relaxed-bound}
\end{align}
With diagonal pivoting, under the assumption that each diagonal entry
is pivoted at most a constant number of times, the asymptotic rate
remains the same.

For the overall time complexity, let us consider the data movement
in diagonal pivoting next. Consider interchanging rows $\vec{\ell}_{k}^{T}$
and $\vec{\ell}_{i}^{T}$ in $\vec{L}_{k:n,1:k-1}$, where $i>k$.
If $\ell_{kj}\neq0$ or $\ell_{ij}\neq0$, maintaining \textsf{row\_ind}
sorted (along with \textsf{val}) for the nonzeros in $\vec{L}_{k:i,j}$
requires $\mathcal{O}(\text{nnz}(\vec{L}_{k:i,j}))$ operations, because
we keep track of the starting row indices of the nonzeros in $\vec{L}_{k:n,j}$
in CCS. Updating the augmented linked list requires linear time in
the number of nonzeros in $\vec{\ell}_{k}^{T}\cup\vec{\ell}_{i}^{T}$.
Hence, the total data movement of row interchange at step $k$ is
$\mathcal{O}\left(\text{nnz}\left(\bigcup_{\{j\mid\ell_{kj}\neq0\text{ or }\ell_{ij}\neq0\}}\vec{L}_{k:i,j}\right)\right)$,
and the overall cost of row interchanges in $\vec{L}$ is $\mathcal{O}(N_{L})$.
Similarly, the overall cost of column interchanges in $\vec{U}$ is
$\mathcal{O}(N_{U})$.

Finally, consider the cost of thresholding and sorting in $\vec{L}$.
At the $k$th step, its complexity is $\mathcal{O}\left(\text{nnz}\left(\hat{\vec{\ell}}_{k}\right)+\text{nnz}\left(\vec{\ell}_{k}\right)\log\left(\text{nnz}\left(\vec{\ell}_{k}\right)\right)\right)$,
where $\hat{\vec{\ell}}_{k}$ and $\vec{\ell}_{k}$ denotes the $k$th
column in $\vec{L}$ before and after dropping, respectively. Note
that $\sum_{k}\text{nnz}\left(\hat{\vec{\ell}}_{k}\right)=\mathcal{O}(N_{L})$.
Furthermore, 
\begin{align*}
\sum_{k}\text{nnz}\left(\vec{\ell}_{k}\right)\log\left(\text{nnz}\left(\vec{\ell}_{k}\right)\right) & \leq\text{nnz}\left(\vec{L}\right)\log\left(\max_{j\leq m}\left\{ \text{nnz}\left(\vec{\ell}_{j}\right)\right\} \right)\\
 & \leq\mathcal{O}\left(\text{nnz}\left(\vec{L}\right)\max_{j\leq m}\left\{ \text{nnz}\left(\vec{a}_{j}\right)\right\} \right).
\end{align*}
Similarly, thresholding in $\vec{U}$ takes $\mathcal{O}\left(N_{U}+\text{nnz}\left(\vec{U}\right)\max_{i\leq m}\left\{ \text{nnz}\left(\vec{a}_{i}^{T}\right)\right\} \right)$
operations. Hence, the total number of operations in \textsf{ilupd\_factor}
is bounded by (\ref{eq:bound-operations}).
\end{proof}
We note two details in Lemma~\ref{lem:time-complexity}. First, $\text{nnz}\left(\vec{a}_{i}^{T}\right)$
and $\text{nnz}\left(\vec{a}_{j}\right)$ only consider the first
$m$ rows and columns in $\vec{P}^{T}\hat{\vec{A}}\vec{Q}$. This
is important if there are a small number of dense rows or columns
in the input matrix, which should be permuted to the end to avoid
increasing the time complexity. Second, (\ref{eq:Crout-bound}) gives
a tighter bound for the floating-point operations in Crout update
if $\max_{j\leq m}\left\{ \text{nnz}\left(\vec{a}_{j}\right)\right\} $
and $\max_{i\leq m}\left\{ \text{nnz}\left(\vec{a}_{i}^{T}\right)\right\} $
have different complexity. In particular, if $\max_{j\leq m}\left\{ \text{nnz}\left(\vec{a}_{j}\right)\right\} =\mathcal{O}\left(\log\left(\max_{i\leq m}\left\{ \text{nnz}\left(\vec{a}_{i}^{T}\right)\right\} \right)\right)$
or vice versa, then sorting may become the most expensive component
of the algorithm. However, typically $\max_{j\leq m}\left\{ \text{nnz}\left(\vec{a}_{j}\right)\right\} =\mathcal{O}\left(\max_{i\leq m}\left\{ \text{nnz}\left(\vec{a}_{i}^{T}\right)\right\} \right)$
and vice versa, especially after matching and reordering. Hence, we
use the relaxed bound in (\ref{eq:Crout-relaxed-bound}) for Crout
update, and then the cost of sorting becomes negligible.

An important special case of Lemma~\ref{lem:time-complexity} is
when the nonzeros per row and per column in the input are bounded
by a constant. This is typically the case for linear systems arising
from PDE discretizations using finite difference or finite element
methods. In this case, we have the following proposition.
\begin{prop}
\label{prop:linear-complexity} If the input matrix $\vec{A}\in\mathbb{R}^{n\times n}$
has a constant number of nonzeros per row and per column, assuming
the number of diagonal pivoting per diagonal entry is bounded by a
constant, the total cost of \textsf{ilupd\_factor} is linear in $n$.
\end{prop}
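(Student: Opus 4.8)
The plan is to obtain this proposition as an immediate corollary of Lemma~\ref{lem:time-complexity}. By that lemma the cost of \textsf{ilupd\_factor} is bounded by (\ref{eq:bound-operations}), namely a product of $\text{nnz}(\vec{L}+\vec{U})$ with $\max_{i\leq m}\{\text{nnz}(\vec{a}_i^T)\}+\max_{j\leq m}\{\text{nnz}(\vec{a}_j)\}$. It therefore suffices to show that, under the stated hypotheses, the second factor is $\mathcal{O}(1)$ while $\text{nnz}(\vec{L}+\vec{U})=\mathcal{O}(n)$, whence the product is $\mathcal{O}(n)$.

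First I would control the two maxima. The rows $\vec{a}_i^T$ and columns $\vec{a}_j$ appearing in the bound are those of $\vec{P}^T\hat{\vec{A}}\vec{Q}$, where $\hat{\vec{A}}$ is produced from $\vec{A}$ by MC64 matching and scaling followed by AMD reordering. Each of these preprocessing operations either permutes rows and columns or multiplies on the left and right by diagonal matrices; none of them creates or destroys nonzeros within a row or a column, but merely relabels and rescales the existing entries. Hence the per-row and per-column nonzero counts of $\vec{P}^T\hat{\vec{A}}\vec{Q}$ coincide, up to permutation, with those of $\vec{A}$, so the constant-sparsity hypothesis on $\vec{A}$ gives $\max_{i\leq m}\{\text{nnz}(\vec{a}_i^T)\}=\mathcal{O}(1)$ and $\max_{j\leq m}\{\text{nnz}(\vec{a}_j)\}=\mathcal{O}(1)$.

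Next I would bound $\text{nnz}(\vec{L}+\vec{U})$. In \textsf{ilupd\_factor} the number of nonzeros retained in the $k$th column $\vec{\ell}_k$ of $\vec{L}$ is capped at $\alpha_L\,\text{nnz}(\vec{a}_{q_k})$ and that in the $k$th row $\vec{u}_k^T$ of $\vec{U}$ at $\alpha_U\,\text{nnz}(\vec{a}_{p_k}^T)$, via the $n_L$ and $n_U$ limits in the dropping step. Since $\alpha_L$ and $\alpha_U$ are constants and every input row and column has $\mathcal{O}(1)$ nonzeros, we obtain $\text{nnz}(\vec{\ell}_k)=\mathcal{O}(1)$ and $\text{nnz}(\vec{u}_k^T)=\mathcal{O}(1)$ for every $k$. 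Summing over the at most $n$ steps yields $\text{nnz}(\vec{L})=\mathcal{O}(n)$ and $\text{nnz}(\vec{U})=\mathcal{O}(n)$, hence $\text{nnz}(\vec{L}+\vec{U})=\mathcal{O}(n)$. Substituting the two estimates into (\ref{eq:bound-operations}) gives a total cost of $\mathcal{O}(n\cdot\mathcal{O}(1))=\mathcal{O}(n)$.

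The only genuinely non-mechanical step, and the one I would be most careful to justify, is the invariance of the per-row and per-column nonzero counts under preprocessing: the proposition is stated for the original $\vec{A}$, whereas Lemma~\ref{lem:time-complexity} is stated for the preprocessed $\hat{\vec{A}}$, so the argument hinges on matching, scaling, and reordering being unable to convert a sparse row or column into a dense one. Once that observation is in place, the result follows by plugging the $\mathcal{O}(1)$ and $\mathcal{O}(n)$ bounds into the lemma.
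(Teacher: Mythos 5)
Your proposal is correct and follows essentially the same route as the paper: the proposition is obtained as an immediate corollary of Lemma~\ref{lem:time-complexity} by observing that the two maxima in (\ref{eq:bound-operations}) are $\mathcal{O}(1)$ and that $\text{nnz}(\vec{L}+\vec{U})=\mathcal{O}(n)$. You spell out two points the paper leaves implicit --- that permutation and diagonal scaling preserve per-row and per-column nonzero counts, and that the $\alpha_{L},\alpha_{U}$ caps are what force $\text{nnz}(\vec{\ell}_{k})$ and $\text{nnz}(\vec{u}_{k}^{T})$ to be $\mathcal{O}(1)$ --- but these are elaborations of the same argument, not a different one.
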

\begin{proof}
Under the assumptions, $\max_{i\leq n}\left\{ \text{nnz}\left(\vec{a}_{i}^{T}\right)\right\} +\max_{j\leq n}\left\{ \text{nnz}\left(\vec{a}_{j}\right)\right\} =\mathcal{O}(1)$.
Furthermore, $\text{nnz}\left(\vec{L}+\vec{U}\right)=\text{nnz}\left(\vec{A}\right)=\mathcal{O}(n)$.
\end{proof}
For the overall cost of PS-MILU, we also need to take into account
the cost of preprocessing, including matching and reordering, as well
as factorizing the Schur complement. The preprocessing cost is typically
insignificant for practical problems, even for those with millions
of unknowns. However, their complexities may be quadratic or worse
in the number of unknowns \cite{Heggernes01CCM,olschowka1996new}.
Developing scalable matching and reordering method are challenging
problems in their own right, and it is beyond the scope of this paper.
In addition, since we do not drop nonzeros in the Schur complement,
its nonzeros may grow superlinearly in the worst case, but this rarely
happens with the S-version. With the T-version or the H-version, however,
$\vec{T}_{E}$ and $\vec{T}_{F}$ in line~\ref{line:H-version} of
Algorithm~\ref{Alg:psmilu_factor} are nearly dense, and computing
$\vec{T}_{E}$ and $\vec{T}_{F}$ would lead to superlinear cost unless
the size of the Schur complement is bounded by a constant. Hence,
we use the H-version only in the last level when the size of $\vec{S}_{C}$
is smaller than a constant $c_{h}$. Assuming the preprocessing cost
is negligible, we expect both \textsf{psmilu\_factor} and \textsf{psmilu\_solve}
to scale nearly linearly for the systems arising from PDE discretizations,
under the assumptions in Proposition~\ref{prop:linear-complexity}.
We will demonstrate the near linear scaling of \textsf{psmilu\_factor
}in Section~\ref{subsec:Asymptotic-Growth-with}.

\section{Numerical Study\label{sec:Numerical-Results}}

To verify the theoretical analysis and the algorithm in this paper,
we have developed a proof-of-concept implementation of PS-MILU in
MATLAB. In this section, we present some preliminary numerical results
to demonstrate the effectiveness of PS-MILU. To this end, we have
constructed a collection of linear systems from PDE discretizations
and solved them using restarted GMRES with PS-MILU as a right-preconditioner.
We also compare PS-MILU against ILUPACK and SuperLU to demonstrate
that PS-MILU scales better than these two of the best variants of
ILU.

\subsection{Benchmark problems}

In the literature, most experiments of ILU use small systems, such
as those from the Matrix Market \cite{boisvert1997matrix} or the
UF Sparse Matrix Collection \cite{davis2011university}. To verify
the robustness of PS-MILU, we have successfully tested it using some
of these problems, including those from \cite{bollhofer2011ilupack}
and \cite{li2005overview}. However, most of those systems have only
thousands to tens of thousands of unknowns, so they are not representative
of linear systems arising from modern PDE-based applications. In addition,
there are very few predominantly symmetric systems in those collections,
there is a lack of matrix series for scalability analysis, and most
systems do not have the right-hand-side vectors, which are important
for studying the convergence of KSP methods. For these reasons, we
built a collection of predominantly symmetric benchmark systems with
up to about half a million unknowns. Table~\ref{tab:test_matrices}
summarizes these matrices, including their origins, the numbers unknowns
and nonzeros, and the estimated condition numbers. For completeness,
we describe the numerical methods used in generating these systems
below.

\begin{table}
\caption{\label{tab:test_matrices}Summary of test matrices. }

\centering{}%
\begin{tabular}{c|c|c|c|c}
\hline 
\textbf{Matrix ID} & \textbf{Method} & \textbf{\#Unknowns} & \textbf{\#Nonzeros} & \textbf{Cond. No.}\tabularnewline
\hline 
\hline 
E2d1 & \multirow{7}{*}{FEM 2D} & 32,575 & 368,086 & 3.88e4\tabularnewline
\cline{1-1} \cline{3-5} 
E2d2 &  & 54,953 & 623,260 & 6.79e4\tabularnewline
\cline{1-1} \cline{3-5} 
E2d3 &  & 72,474 & 823,472 & 1.06e5\tabularnewline
\cline{1-1} \cline{3-5} 
E2d4 &  & 87,744 & 998,298 & 1.13e5\tabularnewline
\cline{1-1} \cline{3-5} 
E2d5 &  & 117,449 & 1,338,133 & 1.46e5\tabularnewline
\cline{1-1} \cline{3-5} 
E2d6 &  & 268,259 & 3,066,125 & 3.94e5\tabularnewline
\cline{1-1} \cline{3-5} 
E2d7 &  & 456,933 & 5,229,627 & 5.59e5\tabularnewline
\hline 
E3s1 & \multirow{9}{*}{FEM 2D} & 26,724 & 336,156 & 1.48e3\tabularnewline
\cline{1-1} \cline{3-5} 
E3s2 &  & 51,043 & 668,529 & 2.39e3\tabularnewline
\cline{1-1} \cline{3-5} 
E3s3 &  & 75,327 & 1,005,606 & 3.31e3\tabularnewline
\cline{1-1} \cline{3-5} 
E3s4 &  & 117,683 & 1,609,248 & 5.26e3\tabularnewline
\cline{1-1} \cline{3-5} 
E3s5 &  & 200,433 & 2,800,888 & 8.78e3\tabularnewline
\cline{1-1} \cline{3-5} 
E3s6 &  & 386,060  & 5,509,359 & 1.66e4\tabularnewline
\cline{1-1} \cline{3-5} 
E3t1 &  & 98,068 & 1,200,308 & 1.62e8\tabularnewline
\cline{1-1} \cline{3-5} 
E3t2 &  & 177,645 & 2,327,903 & 1.59e8\tabularnewline
\cline{1-1} \cline{3-5} 
E3t3 &  & 315,148 & 4,138,040 & 8.19e8\tabularnewline
\hline 
D2q1 & \multirow{2}{*}{FDM 2D} & 158,802 & 792,416 & 1.63e5\tabularnewline
\cline{1-1} \cline{3-5} 
D2q2 &  & 248,502 & 1,240,516 & 2.56e5\tabularnewline
\hline 
D3c1 & \multirow{2}{*}{FDM 3D} & 112,896 & 776,256 & 2.32e3\tabularnewline
\cline{1-1} \cline{3-5} 
D3c2 &  & 219,600 & 1,515,360 & 3.57e3\tabularnewline
\hline 
\end{tabular}
\end{table}

All of these linear systems were constructed from discretizing the
Poisson equation with Dirichlet and Neumann boundary conditions, i.e.,
\begin{align}
-\nabla^{2}u & =f\qquad\,\,\,\,\text{ in }\Omega,\label{eq:Poisson1}\\
u & =g\qquad\text{ on }\partial\Omega_{D},\label{eq:Poisson-2}\\
\frac{\partial u}{\partial\vec{n}} & =h\qquad\text{ on }\partial\Omega_{N},
\end{align}
where $\vec{n}$ denotes the outward normal to the domain. Different
domains, numerical methods, and boundary conditions can lead to linear
systems with different nonzero patterns and condition numbers. We
considered both finite elements (FEM) and finite differences (FDM)
in 2D and 3D. For FEM, we used the above equation with Dirichlet boundary
conditions, discretized using quadratic, six-node triangles over a
unit circle in 2D and using linear tetrahedra over a sphere and torus
in 3D. All the meshes were generated using Gmsh \cite{Geuzaine2009gmsh}.
Figure\ \ref{fig:Meshes} shows some example meshes at a very coarse
level. The torus meshes tend to have poor element qualities and hence
more ill-conditioned linear systems. We construct the right-hand side
and the boundary conditions by differentiating the analytic functions
\begin{align}
u_{1} & =2ye^{x+y}\qquad\,\,\,\,\qquad\,\,\,\,\qquad\,\,\,\qquad\qquad\text{ in 2D, }\label{eq:analytical 2D fem}\\
u_{2} & =-(4(x^{2}+y^{2}+z^{2})+6)e^{x^{2}+y^{2}+z^{2}}\,\,\,\,\text{ in 3D.}\label{eq:analytical 3D fem}
\end{align}
The Dirichlet boundary nodes were not eliminated from the systems,
and hence the matrices are predominantly symmetric. For FDM, we used
centered differences over a square in 2D and a cube in 3D, subject
to the Neumann boundary conditions on the top and Dirichlet boundary
conditions elsewhere. We construct the right-hand side and the boundary
conditions by differentiating the analytic functions 
\begin{align}
u_{1} & =e^{x+y}\qquad\text{in 2D,}\label{eq:analytical 2D fem-1}\\
u_{2} & =e^{x+y+z}\,\,\,\,\text{ in 3D.}\label{eq:analytical 3D fem-1}
\end{align}
All of these linear systems are predominantly symmetric, where the
nonsymmetric parts are only due to boundary nodes.

\begin{figure}
\centering{}\includegraphics[width=0.3\textwidth]{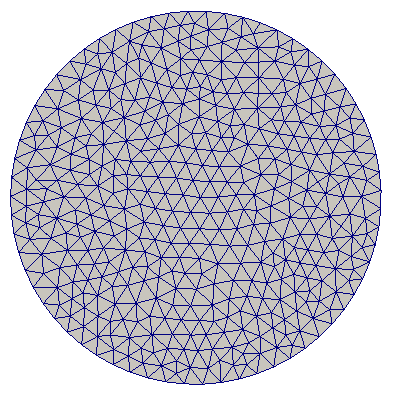}\includegraphics[width=0.39\textwidth]{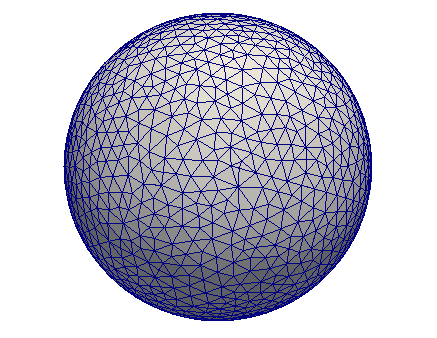}\includegraphics[width=0.3\textwidth]{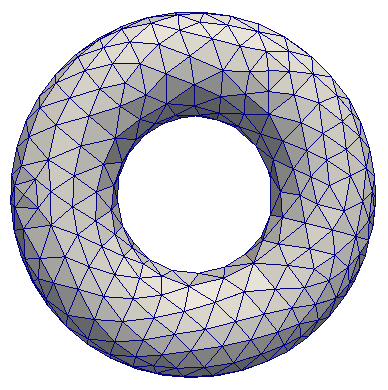}\caption{\label{fig:Meshes}Sample coarse meshes of the unstructured meshes
used for the 2D and 3D FEM.}
\end{figure}

In Table\ \ref{tab:test_matrices}, we identify each matrix by its
discretization type, spatial dimension, geometry, etc. In particular,
the first letter indicates the discretization methods, where \emph{E}
stands for \emph{FEM }and\emph{ D} for finite difference methods (\emph{FDM}).
It is then followed by the dimension (2 or 3) of the geometry. The
next lower-case letter indicates different types of domains, where
\textit{d} stands for \textit{disk}, \textit{s} for \textit{sphere},
\textit{t} for \textit{torus}, \textit{q} for \textit{square}, and
\textit{c} for \textit{cube}. If different mesh sizes of the same
problems were used, we append a digit to the matrix ID, where a larger
digit corresponds to a finer mesh. The condition numbers are in 1-norm,
estimated using MATLAB\textquoteright s \textsf{condest} function. 

\subsection{\label{subsec:Asymptotic-Growth-with}Verification of Linear Complexity}

For all of our test matrices, the number of nonzeros per row and per
column is bounded by a constant. This is typical for linear systems
from PDE discretization methods that use compact local stencils. Hence,
their computational cost should ideally be linear in the number of
unknowns. To verify the complexity analysis in Proposition~\ref{prop:linear-complexity},
we consider matrices E2d1--7 from FEM 2D, whose unknowns range between
approximately $37$K and $450$K, as well as E3s1--6 from FEM 3D,
whose unknowns range between approximately $26$K and $386$K unknowns.
Figure~\ref{fig:Scalability} shows the scalability plots of the
overall factorization time for FEM 2D (left) and 3D (right), respectively.
The \textit{x}-axis corresponds to the number of unknowns, and the
\textit{y}-axis corresponds to the normalized costs, both in logarithmic
scale. For PS-MILU, we show both the estimated floating-point operations
in level-1, along with the overall computational cost at all levels,
including the preprocessing steps. We used the default tolerances
as in Figure~\ref{fig:Desp of options}. As references, we also plot
the costs for multilevel ILU in ILUPACK v2.4 \cite{ilupack} and supernodal
ILU in SuperLU v5.2.1 \cite{li2005overview} for these systems, both
with their respective default parameters. All the tests were conducted
in serial on a single node of a cluster with two 2.5 GHz Intel Xeon
CPU E5-2680v3 processors and 64 GB of memory. For accurate timing,
both turbo and power saving modes were turned off for the processors,
and each node was dedicated to run one problem at a time. Since the
implementations may have a significant impact on the actual runtime,
we normalize each of these measures by dividing it by its corresponding
measure for the coarsest mesh.
\begin{flushleft}
\begin{figure}
\begin{minipage}[t]{0.45\textwidth}%
\begin{center}
\includegraphics[width=1\textwidth]{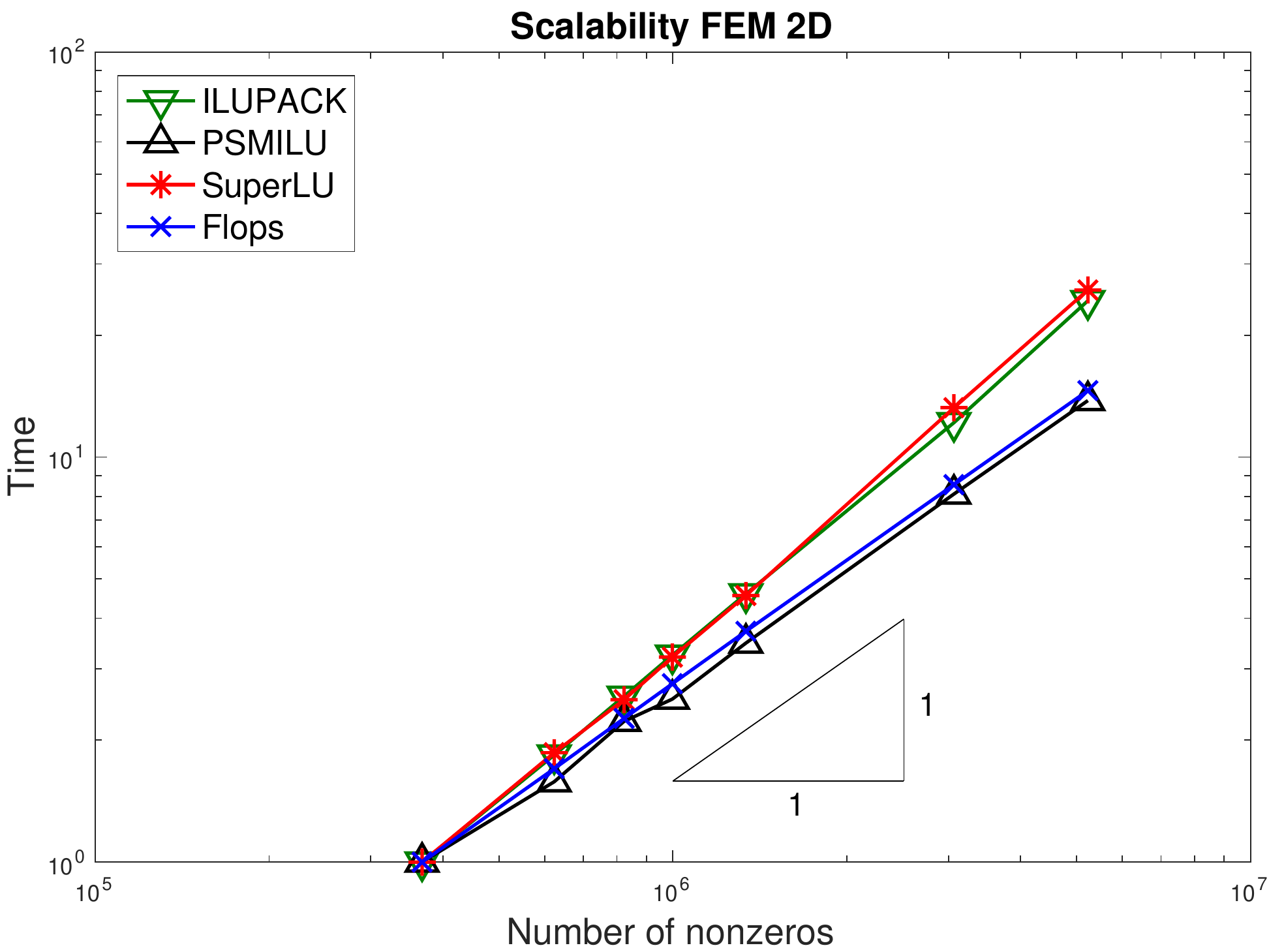}
\par\end{center}%
\end{minipage}\hfill{} %
\begin{minipage}[t]{0.45\textwidth}%
\begin{center}
\includegraphics[width=1\textwidth]{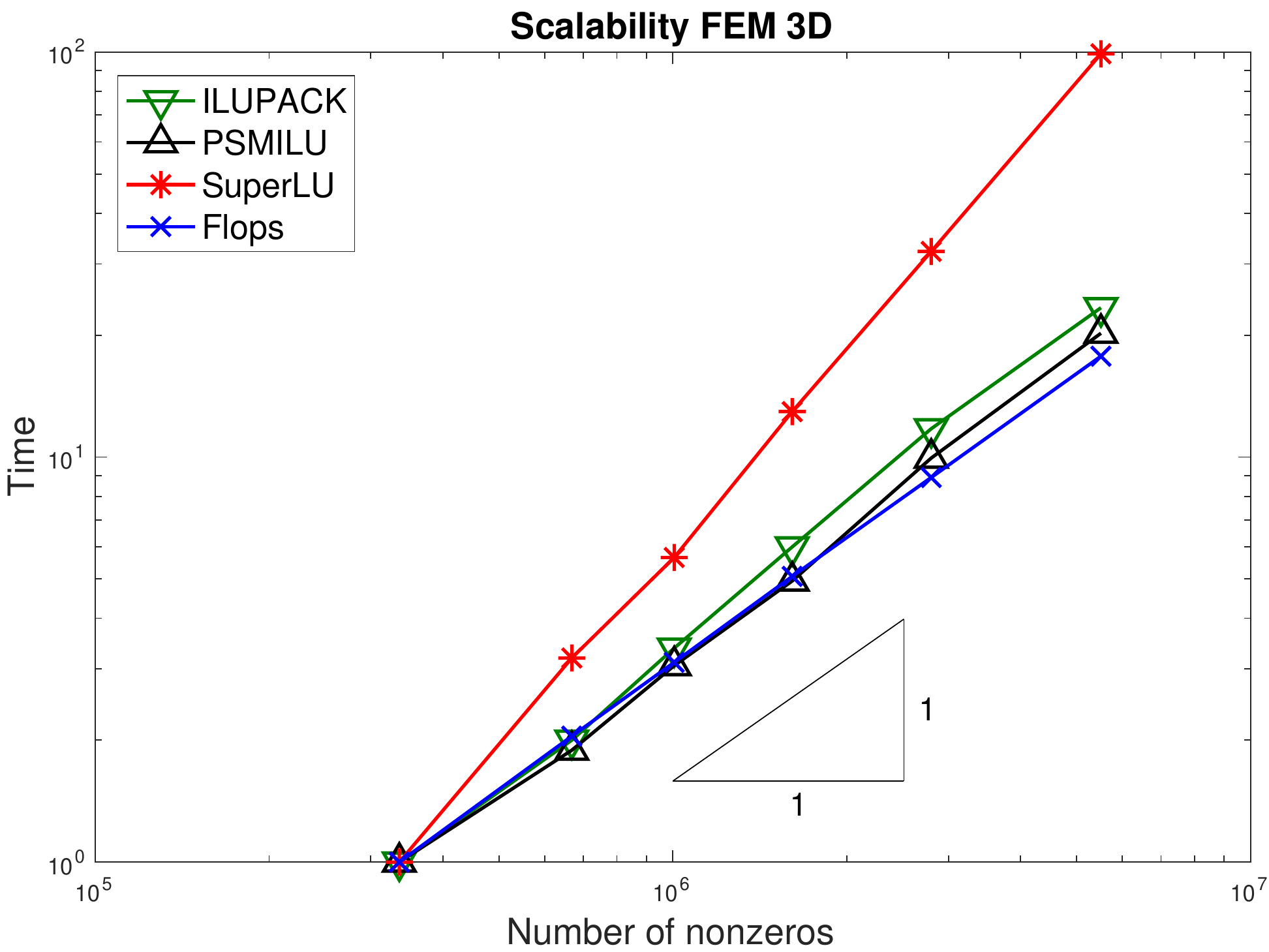}
\par\end{center}%
\end{minipage}\caption{\label{fig:Scalability} Asymptotic growth of factorization times
of E2d1--7 (left) and E3s1--6 with PS-MILU along with ILUPACK and
SuperLU, compared against theoretical flop counts.}
\end{figure}
\par\end{flushleft}

In Figure~\ref{fig:Scalability}, it can be seen that the total number
of floating-point operations in level-1 scales linearly, as predicted
by Proposition~\ref{prop:linear-complexity}. Since the updates in
level-1 dominate the overall algorithm, PS-MILU exhibited almost linear
scaling overall, despite nearly quadratic cost of the pre-processing
steps. In contrast, the supernodal ILU in SuperLU scales superlinearly,
and has much higher complexities in 3D than 2D. This is primarily
due to the use of column pivoting in SuperLU. On the other hand, ILUPACK
scales much worse than PS-MILU in 2D, at a rate comparable with SuperLU,
although it scales nearly as good as PS-MILU for 3D problems. This
is because the condition numbers of the linear systems from Poisson
equations grow at the rate of $\mathcal{O}(h^{-2})$, where $h$ denotes
the average edge length for quasi-uniform meshes. In 2D and 3D, the
condition numbers are approximately $\mathcal{O}(n)$ and $\mathcal{O}(n^{2/3})$,
respectively. Hence, with similar numbers of unknowns, 2D problems
tend to lead to more pivoting and more levels with multilevel ILU.
Because ILUPACK switches to use the T-version of the Schur complements
when the number of levels is large, it suffered from higher time complexities
for 2D problems than 3D problems. PS-MILU overcomes their scalability
issues by using diagonal pivoting instead of column pivoting and using
the H-version only for constant-size Schur complements.

\subsection{Speedup due to Predominant Symmetry}

In Table~\ref{tab:test_matrices}, due to the surface-to-volume ratio,
the symmetric portions of these systems are predominant. PS-MILU may
speed up the factorization by a factor of $2$ for the update of $\vec{L}_{B}$
and $\vec{U}_{B}$ by utilizing symmetry. However, since both $\vec{L}_{E}$
and $\vec{U}_{F}$ must be updated in PS-MILU, we cannot expect a
perfect twofold speedup. Table\ \ref{tab:PS-MILU versus MILU-speedup}
compares the speedups of PS-MILU versus nonsymmetric MILU at level
1 for 14 matrices. We show both the speedups of Crout update alone
and those of the overall factorization step. The Crout update achieved
a nearly twofold speedup for most cases, and the overall factorization
achieved a speedup between 1.4 and $1.6$.

\begin{table}
\caption{\label{tab:PS-MILU versus MILU-speedup} Speedups of PS-MILU versus
nonsymmetric MILU by taking advantage of predominant symmetry at level
1.}

\centering{}%
\begin{tabular}{>{\centering}m{2cm}|c|c}
\hline 
\multirow{1}{2cm}{\textbf{Matrix ID}} & \textbf{Update Speedup} & \textbf{Overall Speedup}\tabularnewline
\hline 
\hline 
E2d4 & 1.8 & 1.5\tabularnewline
\hline 
E2d5 & 1.9 & 1.5\tabularnewline
\hline 
E2d6 & 1.8 & 1.5\tabularnewline
\hline 
E2d7 & 1.8 & 1.5\tabularnewline
\hline 
E3s4 & 1.8 & 1.5\tabularnewline
\hline 
E3s5 & 1.9 & 1.6\tabularnewline
\hline 
E3s6 & 1.8 & 1.6\tabularnewline
\hline 
E3t1 & 1.7 & 1.5\tabularnewline
\hline 
E3t2 & 1.7 & 1.5\tabularnewline
\hline 
E3t3 & 1.7 & 1.5\tabularnewline
\hline 
D2q1 & 1.7 & 1.4\tabularnewline
\hline 
D2q2 & 1.7 & 1.4\tabularnewline
\hline 
D3c1 & 1.9 & 1.5\tabularnewline
\hline 
D3c2 & 1.9 & 1.5\tabularnewline
\hline 
\end{tabular}
\end{table}

\subsection{Effectiveness as Preconditioners}

Finally, we assess the effectiveness of PS-MILU as a preconditioner.
In particular, we compare PS-MILU versus using the fully nonsymmetric
MILU as a right-preconditioner of GMRES(30) for solving predominantly
symmetric systems. Table\ \ref{tab:PS-MILU versus MILU-fillin} shows
the ratios of the numbers of nonzeros in the output versus that in
the input (i.e., $\text{nnz}(\vec{L}+\vec{D}+\vec{U})/\text{nnz}(\vec{A})$),
the number of diagonal pivots, and the number of GMRES iterations
for both nonsymmetric MILU and PS-MILU. Our default thresholds in
the previous subsection worked for all of the test problems. In Table~\ref{tab:PS-MILU versus MILU-fillin},
we used the default drop-tolerances for most problems. For the well-conditioned
E3s1-6 and D3c1-2, there was no pivoting with the default parameters,
so we used $\tau_{k}=20$ to force some pivoting. It appeared that
nonsymmetric MILU is more sensitive to the thresholds than PS-MILU,
in that a smaller $\tau_{k}$ tends to lead to more pivoting for nonsymmetric
MILU than for PS-MILU. 
\begin{table}
\caption{\label{tab:PS-MILU versus MILU-fillin} Comparison of PS-MILU versus
nonsymmetric MILU as a right-preconditioner for GMRES(30). In the
GMRES columns, the numbers in each entry are the numbers of iterations
with relative convergence tolerances of $10^{-6}$ and $10^{-12}$,
respectively}

\centering{}%
\begin{longtable}{lcccccc}
\toprule 
\multirow{2}{*}{\textbf{Matrix}} & \multicolumn{3}{c}{\textbf{nonsymmetric MILU}} & \multicolumn{3}{c}{\textbf{PS-MILU }}\tabularnewline
\cmidrule{2-7} 
 & \textbf{ Ratio} & \textbf{\#Pivots} & \textbf{GMRES} & \textbf{ Ratio} & \textbf{\#Pivots} & \textbf{GMRES }\tabularnewline
\midrule
\midrule 
E2d4 & 2.00 & 430 & 59/128 & 2.01 & 401 & 59/123\tabularnewline
\midrule 
E2d5 & 2.01 & 613 & 63/157 & 2.07 & 555 & 63/158\tabularnewline
\midrule 
E2d6 & 2.01 & 1447 & 113/305 & 2.09 & 1335 & 111/299\tabularnewline
\midrule 
E2d7 & 2.00 & 2503 & 145/410 & 2.09 & 2302 & 144/404\tabularnewline
\midrule 
E3s4 & 2.26 & 476 & 16/37 & 2.33 & 450 & 16/38\tabularnewline
\midrule 
E3s5 & 2.28 & 1005 & 17/44 & 2.34 & 728 & 17/46\tabularnewline
\midrule 
E3s6 & 2.31 & 2213 & 19/55 & 2.38 & 1543 & 20/57\tabularnewline
\midrule 
E3t1 & 2.27 & 123 & 34/97 & 2.21 & 71 & \textbf{25/67}\tabularnewline
\midrule
E3t2 & 2.28 & 167 & 35/108 & 2.26 & 110 & \textbf{30/88}\tabularnewline
\midrule
E3t3 & 2.33 & 270 & 60/166 & 2.32 & 199 & \textbf{42/124}\tabularnewline
\midrule 
D2q1 & 3.58 & 468 & 72/203 & 3.81 & 345 & 72/204\tabularnewline
\midrule 
D2q2 & 3.58 & 600 & 109/282 & 3.77 & 623 & 112/295\tabularnewline
\midrule 
D3c1 & 4.41 & 652 & 21/41 & 4.56 & 411 & 21/41\tabularnewline
\midrule 
D3c2 & 4.45 & 1188 & 25/51 & 4.60 & 844 & 26/52\tabularnewline
\bottomrule
\end{longtable}
\end{table}

To assess the robustness of PS-MILU as a preconditioner, we use a
relative convergence tolerance of $10^{-12}$ for GEMRES(30). It is
well known that the restarted GEMRES tends to stagnate for large systems
without a good preconditioner. However, GMRES(30) with PS-MILU succeeded
for all of our test cases for such a small tolerance. Since some software
(such as MATLAB) use $10^{-6}$ as the default convergence tolerance
for GMRES, we report the numbers of iterations of GMRES(30) to achieve
both $10^{-6}$ and $10^{-12}$. It can be seen that PS-MILU and MILU
produced comparable numbers of nonzeros and GMRES iterations for most
cases. However, nonsymmetric MILU tends to introduce more pivots.
In addition, PS-MILU accelerated the convergence of GMRES better than
nonsymmetric MILU for E3t1--3, which are the most ill-conditioned
linear systems in our tests. Note that the number of iterations more
than doubled when squaring the convergence tolerance, indicating a
sub-linear convergence rate of GMRES(30) due to restarts. In addition,
the number of iterations still grew as the problem size increased.
We observed the same behavior with ILUPACK. Hence, in spite of the
linear complexity PS-MILU, one cannot expect the overall time complexity
to be linear when using PS-MILU as a preconditioner of a Krylov subspace
method. For a truly scalable solver, one would still need to use multigrid
methods.

\section{Conclusions and Future Work\label{sec:Conclusions and Future Work}}

In this paper, we proposed a multilevel incomplete LU factorization
technique, called \emph{PS-MILU}, as a preconditioner for Krylov subspace
methods. PS-MILU unifies the treatment of symmetric and nonsymmetric
linear systems, and it is robust due to the use of scaling, diagonal
pivoting, inverse-based thresholding, and systematic treatment of
the Schur complement. Its computational cost is nearly linear in the
number of unknowns for typical linear systems arising from PDE discretizations.
This is achieved by introducing augmented CCS and CRS data structures
and conducting careful complexity analysis of the algorithm. In addition,
we also introduced the concept of predominantly symmetric matrices.
We showed that PS-MILU can take advantage of this partial symmetry
to speed up the update operations by nearly a factor of two. We have
implemented the proposed algorithm in MATLAB and reported numerical
experimentation to demonstrate its robustness and linear scaling for
a collection of benchmark problems with up to half a million unknowns.
In addition, we compared PS-MILU against multilevel ILU in ILUPACK
and the supernodal ILU and analyzed the reasons of their poor scaling,
and explained how PS-MILU avoided those issues.

There are several limitations in this work. First, we primarily considered
linear systems from PDE discretizations. Although we have tested the
robustness of PS-MILU for smaller benchmark problems from other domains
in the literature, we have not assessed the scalability for large
systems from other applications, such as large KKT systems arising
from constrained optimizations. Second, we only considered MC64 matching
during preprocessing. There are other preprocessing techniques, such
as PQ-reordering \cite{saad2005multilevel}, which may be beneficial
for PS-MILU. Third, our current algorithm is only sequential, which
will ultimately limit the sizes of the problems that can be solved.
Finally, our proof-of-concept implementation is in MATLAB, which is
not the most efficient. We plan to address these issues in our future
research.

\section*{Acknowledgments}

Results were obtained using the LI-RED computer system at the Institute
for Advanced Computational Science of Stony Brook University, funded
by the Empire State Development grant NYS \#28451. We thank Dr. Matthias
Bollhöfer for sharing his ILUPACK code with us. We thank our colleagues
Yipeng Li and Qiao Chen for help with generating the FEM and FDM test
cases.

\bibliographystyle{abbrv}
\bibliography{refs/refs,refs/multigrid,refs/compkrylov_refs,refs/psmilu,refs/refs_18}

\end{document}